\documentclass[11pt]{article}
\usepackage{amsmath,amsfonts,amssymb,amsthm,color}
\setlength{\topmargin}{-1cm} \setlength{\oddsidemargin}{1cm}
\setlength{\textheight}{22cm} \setlength{\textwidth}{14cm}

\begin{document}
\newcommand{\1}{{{\bf 1}}}
\newcommand{\id}{{\rm id}}
\newcommand{\Hom}{{\rm Hom}\,}
\newcommand{\End}{{\rm End}\,}
\newcommand{\Image}{{\rm Im}\,}
\newcommand{\Ind}{{\rm Ind}\,}
\newcommand{\Aut}{{\rm Aut}\,}
\newcommand{\Ker}{{\rm Ker}\,}
\newcommand{\gr}{{\rm gr}}
\newcommand{\Der}{{\rm Der}\,}
\newcommand{\Res}{{\rm Res}}

\newcommand{\Z}{\mathbb{Z}}
\newcommand{\Q}{\mathbb{Q}}
\newcommand{\C}{\mathbb{C}}
\newcommand{\N}{\mathbb{N}}
\newcommand{\g}{\mathfrak{g}}
\newcommand{\gl}{\mathfrak{gl}}
\newcommand{\h}{\mathfrak{h}}
\newcommand{\wt}{{\rm wt}\;}
\newcommand{\A}{\mathcal{A}}
\newcommand{\D}{\mathcal{D}}
\newcommand{\Lie}{\mathcal{L}}

\def \<{\langle}
\def \>{\rangle}
\def \be{\begin{equation}\label}
\def \ee{\end{equation}}
\def \bex{\begin{exa}\label}
\def \eex{\end{exa}}
\def \bl{\begin{lem}\label}
\def \el{\end{lem}}
\def \bt{\begin{thm}\label}
\def \et{\end{thm}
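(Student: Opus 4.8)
The plan is to derive the statement from the defining bracket relations of $\Lie$ together with the universal property of the induced module $\Ind$. First I would fix a triangular decomposition $\Lie=\Lie_{-}\oplus\Lie_{0}\oplus\Lie_{+}$ compatible with the weight grading, write the generating fields explicitly in terms of their Fourier modes, and record that $\Lie_{0}$ acts diagonalizably with finite-dimensional weight spaces on the space in question. This reduces the assertion to two claims: (i) the candidate assignment on the generators respects every relation, so that it factors through a representation $\Lie\to\End M$ --- equivalently through $\A$ --- and (ii) the resulting module map is a bijection.

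For (i) I would verify the relations modewise: for each pair of generating fields, compute the commutator of the associated operators applied to a test vector and compare it coefficient by coefficient with the expansion dictated by the operator product expansion. The one delicate point is the bookkeeping of normal-ordering corrections, which I would handle through the standard expansion of $(z-w)^{-1}$ and repeated use of $\Res_{z}$. For (ii), surjectivity is immediate since the image contains the generators and is stable under all operators, so the substance is injectivity: writing an arbitrary vector in a fixed monomial ordering prescribed by the weight grading, one shows that a nonzero kernel element of minimal weight would contradict the weight estimate --- equivalently, that $\Ker$ of the map is a proper submodule while still containing the cyclic generator.

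The step I expect to be the genuine obstacle is injectivity in (ii), namely ruling out ``hidden'' relations among the spanning monomials. My primary approach is to produce enough explicit matrix coefficients --- by pairing with a suitable contragredient module, or by evaluating against a distinguished functional on $\A$ --- to separate the monomials within each fixed weight space. If no clean pairing is available, the fallback is to pass to the associated graded object via $\gr$, compare its graded character with that of an explicit model such as a polynomial algebra or a Verma-type module for $\g$, and verify that the induced map on $\gr$ is manifestly injective; this then lifts to injectivity of the original map.
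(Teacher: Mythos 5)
Your plan aims at the wrong pressure point. For Theorem \ref{tA-B} the induced-module part that you spend most of your effort on --- checking that the mode relations define a representation and that PBW-type monomials are linearly independent --- concerns only $V_{\Lie}=U(\Lie)\otimes_{U(\Lie^{\ge 0})}\C$, and that part is standard: the paper obtains it from the construction of the Lie superalgebra $\Lie(A\oplus B)$ out of the $1$-truncated conformal superalgebra of Proposition \ref{pact}, Theorem \ref{PBW}, and the usual machinery producing the vertex superalgebra structure on such generalized Verma modules. The actual content of Theorem \ref{tA-B} lies in the quotient $V_{B}=V_{\Lie}/I_{B}$, where $I_{B}=U(\Lie)\C[\D]E$ and $E$ is spanned by $e-\1$, $a(-1)a'-aa'$, $a(-1)b-ab$. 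What must be shown is (a) that $I_{B}$ is a graded (two-sided) ideal, (b) that its homogeneous components of degree $0$ and $1$ meet $(V_{\Lie})_{(0)}$ and $(V_{\Lie})_{(1)}$ in exactly the right subspaces, so that $(V_{B})_{(0)}=A$ and $(V_{B})_{(1)}=B$ (in particular $V_{B}\neq 0$), and (c) that the relations in $E$ allow one to eliminate the modes $a(-n)$ and $e$, yielding the spanning set $b_1(-n_1)\cdots b_k(-n_k)\1$ for $(V_{B})_{(n)}$. Your proposal never mentions the ideal at all, so none of (a)--(c) is addressed; the theorem is not about an induced module being isomorphic to a free model, but about controlling a quotient of one.

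Moreover, your fallback tools for injectivity do not apply to this quotient. There is no ready-made contragredient pairing on $V_{\Lie}/I_{B}$ (invariant bilinear forms are exactly what the semi-conformal structure of Section 3 is needed for), and comparing the graded character of $\gr$ with a polynomial model is circular here, since the graded dimension of the quotient is precisely what has to be determined. The paper's substitute (following Li--Yamskulna) is Lemma \ref{lprepare}: the stability statements $v(n)E\subset E$ for $v\in A\oplus B$, $n\ge 0$, together with $\D E_{0}\subset E_{1}$ and $B(-1)E_{0}\subset A(-1)E_{1}+E_{1}$, proved by the explicit commutator computation displayed there using the vertex-superalgebroid identities of Definition \ref{dalgebroid}; these let one compute $I_{B}$ degree by degree and read off $(V_{B})_{(0)}$, $(V_{B})_{(1)}$ and the spanning set. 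Any correct proof has to contain an argument of this kind, and your outline has no counterpart to it. (If you instead intended Theorem \ref{PBW} itself, note that the paper does not prove it but cites the literature; your monomial/associated-graded skeleton is then the standard route, but the OPE and normal-ordering bookkeeping you emphasize plays no role in that statement.)
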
}
\def \bp{\begin{prop}\label}
\def \ep{\end{prop}}
\def \br{\begin{rem}\label}
\def \er{\end{rem}}
\def \bc{\begin{coro}\label}
\def \ec{\end{coro}}
\def \bd{\begin{de}\label}
\def \ed{\end{de}}

\newtheorem{thm}{Theorem}[section]
\newtheorem{prop}[thm]{Proposition}
\newtheorem{coro}[thm]{Corollary}
\newtheorem{conj}[thm]{Conjecture}
\newtheorem{exa}[thm]{Example}
\newtheorem{lem}[thm]{Lemma}
\newtheorem{rem}[thm]{Remark}
\newtheorem{de}[thm]{Definition}
\newtheorem{hy}[thm]{Hypothesis}
\makeatletter \@addtoreset{equation}{section}
\def\theequation{\thesection.\arabic{equation}}
\makeatother \makeatletter

\begin{Large}
\begin{center}
\textbf{Semi-conformal structure on certain vertex superalgebras 
associated to vertex superalgebroids}
\end{center}
\end{Large}

\begin{center}
{ Ming Li$^{a}$\footnote{Partially supported by Scholarship Program of Xiamen University (Nos.Y03108)}\\
$\mbox{}^{a}$School of Mathematical Sciences, Xiamen University,
Xiamen 361005, China}\\
Email: mingli@stu.xmu.edu.cn
\end{center}

\begin{abstract}
In this paper, we first  give the definiton of a vertex superalgebroid. Then we construct  a family of vertex superalgebras associated to vertex superalgebroids. 
As a main result, we find a sufficient and necessary condition that this vertex superalgebras are semi-conformal. In addition, we give an concrete example of this vertex superalgebras and apply our results to this superalgebra.
\end{abstract}

\textbf{Keywords:} vertex superalgebroid, vertex superalgebra, semi-conformal

\section{Introduction}
In the general theory of vertex algebras, an important difference between vertex algebras and vertex operator algebras is the existence of an internal Virasoro algebra module structure on for vertex operator algebras. But in fact, for some research on vertex operator algebra  we use only the $sl_{2}={\rm span}\{L_0, L_{\pm1} \}$-module action. Thus in \cite{fhl}, a notion called quasi-vertex operator algebra arises. Namely a subalgebra of a given vertex operator algebra with assuming only the existence and properties of  $sl_{2}$-action.  A semi-conformal structure on a vertex superalgebra is a module structure for the subalgebra of the Virasoro algebra, generated by $L_m$ with $m\ge -1$, satisfying certain conditions (Ref.\cite{FB}). This structure is essential to the fundamental theory of contragredient modules and symmetric bilinear forms, and it is the minimal structure necessary in the study on modular invariance of characters of modules. 

The notion of a vertex algebroid, first introduced in \cite{gms}, captures a family of  $\N$-graded vertex algebra. From the vertex algebra point of view, a vertex algebroid is a vector space with 3 partially defined operations satisfying a number of disparate identities that make some sense only if one discerns the Borcherds identity lurking behind (Ref.\cite{Ma}).
In \cite{li-y}, Li and Yamskulna classify all the  $\N$-graded simple modules of those vertex algebras in terms of simple modules for certain Lie algebroids. In \cite{li-ya}, they construct and classify graded simple twisted modules for those vertex algebras. In addition, they determine the full automorphism groups of those  vertex algebras in terms of the automorphism groups of the corresponding vertex algebroids. 

In the present paper, firstly we give the definiton of a vertex superalgebroid and the associated vertex superalgebra. The main purpose in this paper is to determine semi-conformal structure on those vertex superalgebras associated to vertex superalgebroids.
By definition, a semi-conformal vertex superalgebra is a vertex superalgebra  equipped with a semi-conformal structure. The main result is  a sufficient and necessary condition that the associated vertex superalgebras are semi-conformal. In addition, we give an concrete example of such vertex superalgebra and apply our results to this superalgebra.

This paper is organized as follows. In Section 2, we review some basic notations, formulas and properties for Lie superalgebras and vertex superalgebras. We give  the definitions  of $1$-truncated conformal superalgebra, vertex superalgebroid and Lie superalgebroid, which are generalization of these algebras in non-super version. We also review tools from \cite{gms} and \cite{li-y}, then we construct  an $\N$-graded  vertex superalgebra for a given vertex superalgebroid. In Section 3, we recall the definition of semi-conformal vertex superalgebra and find a sufficient and necessary condition that the given vertex superalgebras  are semi-conformal. In Section 4, we give an example of this family of vertex superalgebras.
\section{Preliminaries}
 
 We use the usual symbols ${\bf \Z}$, ${\bf\Z}_+$ and $\N$ for the set of integers,
 the positive integers, and   the nonnegative integers respectively. 

Let $M=M_{\bar{0}}\oplus M_{\bar{1}}$ be any superalgebra, i.e., $({\bf \Z}/2{\bf \Z})$-graded algebra. Any element $u$ in $M_{\bar{0}}$ (resp. $M_{\bar{1}}$) is said to be
{\it even} (resp. {\it odd}).
For any homogeneous element $u$, we define $|u|=0$ if $u$
is even, $|u|=1$ if $u$ is odd. We define
$\varepsilon_{u,v}=(-1)^{|u||v|}$, for any homogeneous elements $u,v\in
M$. We note that, the space of endomorphisms of $M$, denoted by ${\rm End} (M)$ is a superalgebra.

Throughout this paper, when we write $|u|$ for an element $ u\in M$, we will always implicitly assume that $u$ is a homogeneous element.
\begin{de}
A Lie superalgebra is a superalgebra $A=A_{\bar{0}}\oplus A_{\bar{1}}$ with multiplication $[\cdot, \cdot]$ satisfying the following two axioms: for homogeneous $a, b, c\in A,$
\begin{eqnarray*}
&&skew-supersymmetry: [a, b]=-\varepsilon_{a,b}[b, a].\\
&&super\,\, Jacobi\,\, identity: [a, [b, c]]=[[a, b], c]+\varepsilon_{a,b}[b, [a, c]].
\end{eqnarray*}
\end{de}

\begin{exa} 
Let $A=A_{\bar{0}}\oplus A_{\bar{1}}$ be an associative superalgebra. A becomes a Lie superalgebra with
\begin{eqnarray}\label{LIE}
[a, b]=ab-\varepsilon_{a,b}ba, \,\,a, b\in A.
\end{eqnarray}
We say that $A$ is super-commutative if $[a, b]=0$ for all $a, b\in A.$
\end{exa}

Let $A$ be a Lie superalgebra. Then ${\rm End}(A)$ is an associative superalgebra, and hence it carries a structure of Lie superalgebra by  (\ref{LIE}).

\begin{de}
Let $A=A_{\bar{0}}\oplus A_{\bar{1}}$ and $A'=A'_{\bar{0}}\oplus A'_{\bar{1}}$ be superalgebras. A linear map $\varphi: A\rightarrow A'$ is said to be even if $\varphi(A_\theta)\subseteq A'_\theta, \forall\,\theta \in {\bf \Z}/2{\bf \Z}$.  
\end{de}

\begin{de}

Let $A=A_{\bar{0}}\oplus A_{\bar{1}}$ be an associative superalgebra. An endomorphism $D\in {\rm End}(A)_{s}$ is called a derivation of degree $s$, if it satisfies the identity
\begin{eqnarray}
D(ab)=D(a)b+(-1)^{s|a|}aD(b), \,\,a, b\in A.
\end{eqnarray}
Denote by ${\rm Der}(A)_s$ the space of derivations on $A$ of degree $s$. 
\end{de}

Now we review some basic  notions of a vertex superalgebra (Refs.\cite{B86},\cite{kac2} and \cite{ll}).
\begin{de}A  vertex superalgebra is a triple
$(V,{\bf 1},Y)$, where $V=V_{\bar{0}}\oplus V_{\bar{1}}$ is a superspace, ${\bf 1}\in V_{\bar{0}}$ is a specified vector called the {\it vacuum} of $V$,
and $Y$ is a linear map
\begin{eqnarray*}
Y(\cdot,z):& &V\rightarrow ({\rm End}V)[[z,z^{-1}]];\nonumber\\
& &a\mapsto Y(a,z)=\sum_{n\in{\bf \Z}}a_{n}z^{-n-1}\;\;(\mbox{where }
a_{n}\in {\rm End}V)
\end{eqnarray*}
such that
\begin{eqnarray*}
(V1)& &\mbox{For any }a,b\in V, a_{n}b=0\;\mbox{ for }n
\mbox{ sufficiently large;}\\
(V2)& &\mbox{For  }\bar{i}, \bar{j}\in {\bf \Z}/2{\bf \Z}, a\in V_{\bar{i}}, b\in V_{\bar{j}},  Y(a,z)b\in V_{\bar{i}+\bar{j}}[[z,z^{-1}]];\\
(V3)& &Y({\bf 1},z)={\rm id}_{V}\;\;\;\mbox{(the identity operator of $V$)};\\
(V4)& &Y(a,z){\bf 1}\in V[[z]] \mbox{ and }\lim_{z \rightarrow
0}Y(a,z){\bf 1}=a\;\;\mbox{  for any }a\in V;\\
(V5)& &\mbox{For homogeneous elements }a,b\in V,
\mbox{ the following {\it super Jacobi identity} holds:}
\end{eqnarray*}
\begin{eqnarray}
& &z_{0}^{-1}\delta\left(\frac{z_{1}-z_{2}}{z_{0}}\right)Y(a,z_{1})Y(b,z_{2})
-\varepsilon_{a,b}z_{0}^{-1}\delta\left(\frac{z_{2}-z_{1}}{-z_{0}}\right)
Y(b,z_{2})Y(a,z_{1})\nonumber \\
&=&z_{2}^{-1}\delta\left(\frac{z_{1}-z_{0}}{z_{2}}\right)Y(Y(a,z_{0})b,z_{2}).
\end{eqnarray}
\end{de}

From the super Jacobi identity we have Borcherds' super  commutator formula and super iterate formula in component form:
\begin{eqnarray}
[u_{m},v_{n}]&=&\sum_{i\ge 0}\binom{m}{i}(u_{i}v)_{m+n-i},\label{bor}\\
(u_{m}v)_{n}&=&\sum_{i\ge 0}(-1)^{i}\binom{m}{i}
\left( u_{m-i}v_{n+i}-\varepsilon_{u,v}(-1)^{m}v_{m+n-i}u_{i}\right)
\end{eqnarray}
for $u,v\in V,\; m,n\in \Z$, where $[u_{m},v_{n}]=u_{m}v_{n}-\varepsilon_{u,v}v_{n}u_{m}$.

Define a linear operator $\D$ on $V$ by $\D(v)=v_{-2}{\bf 1}$, we have following consequences:
\begin{eqnarray}
& &Y(a,z)b=\varepsilon_{a,b}e^{z\D}Y(b,-z)a\;\;\;\mbox{ for any homogeneous}\,\,a, b\in V.\label{D1}\\
& & [\D,a_{n}]=(\D a)_{n}=-n a_{n-1}\;\;\;\mbox{ for any } a\in V,\; n\in \Z. \label{D2}
\end{eqnarray}

\begin{de}

Let $(V,{\bf 1},Y)$ be a vertex superalgebra. A
$V$-module is a triple $(M, d, Y_{M})$, where $M$ is a superspace, $d$ is an even endomorphism of $M$, and $Y_{M}$ is
a linear map

\begin{eqnarray*}
Y_{M}(\cdot,z):& &V\rightarrow ({\rm End}M)[[z,z^{-1}]];\nonumber\\
& &a\mapsto Y_{M}(a,z)=\sum_{n\in{\bf \Z}}a_{n}z^{-n-1}\;\;(\mbox{where }
a_{n}\in {\rm End}M)
\end{eqnarray*}
 satisfying the
following conditions:
\begin{eqnarray*}
(M1)& &\mbox{For any }a\in V,u\in M, a_{n}u=0\;\mbox{ for }n
\mbox{ sufficient large};\\
(M2)& &Y_{M}({\bf 1},z)={\rm id}_{M};\\
(M3)& &[d,Y_{M}(a,z)]=Y_{M}(D(a),z)={d\over dz}Y_{M}(a,z)\;\;\mbox{
for any }a\in V;\\
(M4)& &\mbox{For homogeneous }a,b\in V,\mbox{ the
following super Jacobi identity holds:}
\end{eqnarray*}
\begin{eqnarray}
& &z_{0}^{-1}\delta\left(\frac{z_{1}-z_{2}}{z_{0}}\right)Y_{M}(a,z_{1})
Y_{M}(b,z_{2})-\varepsilon_{a,b}z_{0}^{-1}\delta\left(\frac{z_{2}-z_{1}}
{-z_{0}}\right)Y_{M}(b,z_{2})Y_{M}(a,z_{1})\nonumber\\
&=&z_{2}^{-1}\delta\left(\frac{z_{1}-z_{0}}{z_{2}}\right)
Y_{M}(Y(a,z_{0})b,z_{2}).
\end{eqnarray}
\end{de}

\begin{de}
A vertex superalgebra $V$ equipped with a $\Z$-grading
 $V=\coprod_{n\in
\Z}V_{n}=V_{\bar{0}}\oplus V_{\bar{1}}$=$\coprod_{n\in\Z}(V_{\bar{0},n}\oplus V_{\bar{1},n})$ is called a {\em $\Z$-graded vertex superalgebra} if ${\bf
1}\in V_{\bar{0},0}$ and  for $u\in V_{\bar{i}, k}$ with $k\in \Z$, $m,n\in \Z$ and $\bar{i}, \bar{j}\in {\bf \Z}/2{\bf \Z}$,
\begin{eqnarray}
u_{m}V_{\bar{j}, n}\subset V_{\bar{i}+\bar{j},\, k+n-m-1}.
\end{eqnarray}
\end{de}
Now we give the definition of 1-truncated conformal superalgebra (Refs.\cite{gms} and \cite{kac2}).
\begin{de}
{\em A {\em $1$-truncated conformal
superalgebra} $C$ is a superspace $C=C_{0}\oplus C_{1}$=$\bigoplus_{ i=0,1}C_{\bar{0},i}\oplus C_{\bar{1},i}$  equipped
with an even linear map $\partial: C_{0}\rightarrow C_{1}$ and a bilinear
product $u_{i}v$ for $i=0,1$ such that the
following axioms hold:

(Derivation) for $a\in C_{0},\; u\in C_{1}$,
\begin{eqnarray}\label{e2.7}
(\partial a)_{0}=0,\;\;(\partial a)_{1}=-a_{0},\;\;
\partial (u_{0}a)=u_{0}\partial a;
\end{eqnarray}

(Super commutativity) for $a\in C_{0},\; u,v\in C_{1}$,
\begin{eqnarray}\label{e2.8}
u_{0}a=-\varepsilon_{u, a}a_{0}u,\;\; u_{0}v=\varepsilon_{u, v}(-v_{0}u+\partial (v_{1}u)),\;\;
u_{1}v=\varepsilon_{u, v}v_{1}u;
\end{eqnarray}

(Super associativity) for $\alpha,\beta,\gamma\in C$,
\begin{eqnarray}\label{e2.9}
\alpha_{0}(\beta_{i}\gamma)
=\varepsilon_{\alpha,\beta}\beta_{i}(\alpha_{0}\gamma) +(\alpha_{0}\beta)_{i}\gamma.
\end{eqnarray}}
\end{de}

Next we  recall the notion of Leibniz superalgebra from \cite{D1}.
\begin{de}
A superspace $\Gamma=\Gamma_{\bar{0}}\bigoplus \Gamma_{\bar{1}}$ with multiplication $[\cdot, \cdot]$ is
called a Leibniz superalgebra, if it satisfies the following conditions:
\begin{eqnarray*}
&&[\Gamma_{\alpha}, \Gamma_{\beta}]\subseteq \Gamma_{\alpha+\beta} \,\,\,for \
all \ \alpha,\beta\in {\bf
\Z}/2{\bf \Z},\\
&&[u, [v, w]]=[[u, v], w]+\varepsilon_{u,v}[v, [u, w]],
\end{eqnarray*}
 for all $u, v, w\in \Gamma$. The second condition is called graded  Leibniz   identity.
\end{de}

Now we introduce the definition of a vertex superalgebroid (Refs.\cite{gms} and \cite{C}).

\bd{dalgebroid}
{\em Let $A$ be a (unital) super-commutative associative superalgebra (over $\C$).
A {\em vertex $A$-superalgebroid} is a $\C$-vector space $\Gamma$ equipped with

(0) a $\C$-bilinear map
\begin{eqnarray}
A\times \Gamma \rightarrow \Gamma;\;\;\;
(a,v)\mapsto a*v
\end{eqnarray}
such that $1*v=v$ (i.e., a ``non-associative unital $A$-module'')

(1) a structure of a Leibniz superalgebra
$[\cdot,\cdot]:
    \Gamma\otimes_{\C}\Gamma\rightarrow \Gamma$

(2) a homomorphism of Leibniz superalgebras
     $\pi: \Gamma\rightarrow {\rm Der}(A)$

(3) a $\C$-bilinear pairing
$\<\cdot,\cdot\>: \Gamma\otimes_{\C}\Gamma\rightarrow A$, such that $\<u,v\>=\varepsilon_{u, v}\<v,u\>.$

(4) a $\C$-linear map $\partial: A\rightarrow \Gamma$ such that
    $\pi\circ \partial =0$,\\
which satisfy the following conditions:
\begin{eqnarray}
a*(a'*v)-(aa')*v&=&\varepsilon_{a, a'}\pi(v)(a)* \partial (a')+\pi(v)(a')*\partial (a)\qquad \quad \label{pe2.22}\\
{[u,a*v]}&=&\pi(u)(a)*v+\varepsilon_{u, a}a*[u,v]\label{pe2.23}\\
{[u,v]+\varepsilon_{u, v}[v,u]}&=& \partial(\<u,v\>)\label{e2.23}\\
\pi (a*v)&=& a\pi(v)\label{epi-hom}\\
\<a*u,v\>&=& a\<u,v\>-\varepsilon_{a, u}\varepsilon_{a, v}\pi(u)(\pi(v)(a))\label{e2.26}\\
\pi(v)(\<v_{1},v_{2}\>)&=&\<[v,v_{1}],v_{2}\>+\varepsilon_{v, v_{1}}\<v_{1},[v,v_{2}]\>
\label{esymmetricmap}\\
\partial (aa')&=& a* \partial (a')+\varepsilon_{a, a'}a'* \partial (a)\label{epartial-der}\\
{[v,\partial (a)]}&=& \partial (\pi(v)(a))\label{epartial-hom}\\
\<v,\partial (a)\>&=& \pi(v)(a)\label{e2.29}
\end{eqnarray}
for $a,a'\in A,\; u, v,v_{1}, v_{2}\in \Gamma$.}
\ed
The following proposition could be proved by the similar discussion as in Proposition 2.11 in \cite{li-y}.
\begin{prop}\label{pact} 
Let $A$ be a unital  super-commutative associative superalgebra
and $B$ a module for $A$ as a nonassociative superalgebra.
Then a vertex $A$-superalgebroid structure on $B$ exactly amounts to
a $1$-truncated conformal superalgebra structure on $C=A\oplus B$ with
\begin{eqnarray}
& &a_{i}a'=0,\\
& &u_{0}v=[u,v],\;\; \ \ u_{1}v=\<u,v\>,\\
& &u_{0}a=\pi(u)(a),\;\; \ \varepsilon_{u, a}a_{0}u=-u_{0}a=-\pi(u)(a)
\end{eqnarray}
for $a,a'\in A,\; u,v\in B,\; i=0,1$, such that
\begin{eqnarray}
& &a(a'u)-(aa')u=\varepsilon_{a, a'}(u_{0}a)\partial a'+(u_{0}a')\partial a,\label{eabu-abu}\\
& &u_{0}(av)-\varepsilon_{u, a}a(u_{0}v)=(u_{0}a)v,\label{eu0av2.36}\\
& &u_{0}(aa')=\varepsilon_{u, a}a(u_{0}a')+(u_{0}a)a',\label{u0ab2.35}\\
& &a'_{0}(av)=\varepsilon_{a, a'}a(a'_{0}v),\label{ea0a'v}\\
& &(au)_{1}v=a(u_{1}v)-\varepsilon_{a, u}\varepsilon_{a, v}u_{0}v_{0}a,\label{eau1v}\\
& &\partial (aa')=a\partial a'+\varepsilon_{a, a'}a'\partial a.\label{epab}
\end{eqnarray}
\end{prop}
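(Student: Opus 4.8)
The strategy is a dictionary translation: one exhibits a manifestly bijective correspondence between the underlying data of the two structures and then checks that the defining relations match term for term. Throughout, the nonassociative $A$-module $(B,*)$ is fixed and common to both descriptions; in the conformal picture $a*v$ is written $av$ and is \emph{not} one of the products $\alpha_i\beta$. Given a vertex $A$-superalgebroid structure $([\cdot,\cdot],\pi,\langle\cdot,\cdot\rangle,\partial)$ on $B$, put on $C=A\oplus B$ (with $C_0=A$, $C_1=B$) the even map $\partial\colon C_0\to C_1$ equal to the given $\partial$ and the products given by the formulas in the statement: $a_ia'=0$, $u_0v=[u,v]$, $u_1v=\langle u,v\rangle$, $u_0a=\pi(u)(a)$, $a_0u=-\varepsilon_{u,a}\pi(u)(a)$, with $a_1u=u_1a=0$ forced by the grading. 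Conversely, a $1$-truncated conformal superalgebra structure on $A\oplus B$ of this special shape returns $[\cdot,\cdot],\pi,\langle\cdot,\cdot\rangle,\partial$ by reading the same formulas backwards (the axiom (\ref{u0ab2.35}) guaranteeing that $u\mapsto(a\mapsto u_0a)$ indeed lands in $\Der(A)$). Since these assignments are plainly mutually inverse, the proposition reduces to showing that the axioms of Definition~\ref{dalgebroid} are, taken together, equivalent to the axioms of a $1$-truncated conformal superalgebra together with (\ref{eabu-abu})--(\ref{epab}); I would verify both implications at once.

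I would translate the three conformal axioms first. \emph{Derivation} (\ref{e2.7}): $(\partial a)_0=0$ on $A$ is $\pi\circ\partial=0$, i.e.\ part (4); $(\partial a)_0=0$ on $B$ reads $[\partial a,\cdot]=0$, which is not one of the superalgebroid axioms but follows from (\ref{e2.23}), (\ref{epartial-hom}) and (\ref{e2.29}), hence imposes nothing new; $(\partial a)_1=-a_0$ on $B$ is (\ref{e2.29}) rewritten via $\langle\partial a,v\rangle=\varepsilon_{a,v}\langle v,\partial a\rangle$; and $\partial(u_0a)=u_0\partial a$ is (\ref{epartial-hom}). \emph{Super commutativity} (\ref{e2.8}): $u_0a=-\varepsilon_{u,a}a_0u$ is the dictionary itself; $u_0v=\varepsilon_{u,v}(-v_0u+\partial(v_1u))$ rearranges, using $\langle v,u\rangle=\varepsilon_{u,v}\langle u,v\rangle$, into $[u,v]+\varepsilon_{u,v}[v,u]=\partial\langle u,v\rangle$, i.e.\ (\ref{e2.23}); and $u_1v=\varepsilon_{u,v}v_1u$ is the symmetry of $\langle\cdot,\cdot\rangle$. \emph{Super associativity} (\ref{e2.9}) is the main labor: running through the cases $\alpha,\beta,\gamma\in\{A,B\}$ and inner index $i\in\{0,1\}$, most are vacuous by degree, and the genuinely new identities produced are exactly the graded Leibniz identity for $[\cdot,\cdot]$ on $B$ (case $\alpha,\beta,\gamma\in B$, $i=0$), the relation $\pi([u,v])=[\pi(u),\pi(v)]$ in $\Der(A)$ (case $\alpha,\beta\in B$, $\gamma\in A$, $i=0$), and (\ref{esymmetricmap}) (case $\alpha,\beta,\gamma\in B$, $i=1$); the two remaining non-vacuous cases (one argument in $A$, two in $B$, $i=0$) reduce, after cancellation of parity factors, to $\pi$ being a homomorphism of Leibniz superalgebras.

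It remains to match the supplementary conditions (\ref{eabu-abu})--(\ref{epab}), which govern the interaction of $*$ with everything else; these translate one-to-one: (\ref{eabu-abu}) is (\ref{pe2.22}); (\ref{eu0av2.36}) is (\ref{pe2.23}); (\ref{u0ab2.35}) says precisely $\pi(u)\in\Der(A)_{|u|}$, completing part (2); (\ref{ea0a'v}) is, granted that $\pi(u)$ is a derivation, equivalent to the $A$-linearity $\pi(a*v)=a\pi(v)$ of (\ref{epi-hom}); (\ref{eau1v}) is (\ref{e2.26}) once $u_0v_0a$ is written $\pi(u)(\pi(v)(a))$; and (\ref{epab}) is (\ref{epartial-der}). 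Assembling all of these shows the two axiom systems are logically equivalent, which proves the proposition, in parallel with Proposition~2.11 of \cite{li-y}. The only real obstacle, and essentially the sole point of divergence from the non-super case, is the disciplined bookkeeping of the sign factors $\varepsilon_{u,v}$ throughout the case analysis of super associativity and in checking that the few non-verbatim conformal relations (notably $[\partial a,v]=0$) are consequences of the listed superalgebroid axioms.
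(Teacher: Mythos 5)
Your proposal is correct and is essentially the paper's own approach: the paper offers no written argument beyond ``by the similar discussion as in Proposition~2.11 of \cite{li-y}'', and your axiom-by-axiom dictionary translation (conformal derivation/commutativity/associativity $\leftrightarrow$ the superalgebroid axioms, with the supplementary conditions (\ref{eabu-abu})--(\ref{epab}) matching (\ref{pe2.22}), (\ref{pe2.23}), the derivation property of $\pi(u)$, (\ref{epi-hom}), (\ref{e2.26}) and (\ref{epartial-der})) is precisely that omitted verification carried out with the super signs. Your treatment of the one non-verbatim point, deducing $[\partial a,v]=0$ from (\ref{e2.23}), (\ref{epartial-hom}) and (\ref{e2.29}), and your identification of the non-vacuous super-associativity cases are accurate.
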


In what follows, we construct the vertex superalgebra associated to vertex superalgebroid, which is following the description of \cite{li-y}.

Let $C=A\oplus B$ be a $1$-truncated conformal superalgebra. Set
$L(A\oplus B)=(A\oplus B)\otimes \C[t,t^{-1}].$ Then we define a super commutator bilinear product $[\cdot,\cdot]$ on $L(A\oplus B)$ by
\begin{eqnarray}
[a\otimes t^{m}, a'\otimes t^{n}]&=&0,\label{edef-bracket-1}\\
{[a\otimes t^{m}, b\otimes t^{n}]}&=&a_{0}b\otimes t^{m+n},\label{edef-bracket-2}\\
{[b\otimes t^{n},a\otimes t^{m}]}&=&b_{0}a\otimes
t^{m+n},\label{edef-bracket-22}\\
{[b\otimes t^{m}, b'\otimes t^{n}]}&=&b_{0}b'\otimes
t^{m+n}+m(b_{1}b')\otimes t^{m+n-1}\label{edef-bracket-3}
\end{eqnarray}
for $a,a'\in A,\; b,b'\in B,\;m,n\in \Z$.

For $a\in A, b\in B\;n\in \Z$, define $\deg (a\otimes t^{n})=-n-1$, $\deg (b\otimes t^{n})=-n.$
Then $L(A\oplus B)$ becomes a $\Z$-graded superalgebra.
Set
\begin{eqnarray*}
\hat{\partial}=\partial\otimes 1+1\otimes d/dt: 
\;\; L(A)\rightarrow L(A\oplus B)
\end{eqnarray*}
and ${\mathcal{L}}(A\oplus B)=L(A\oplus B)/\hat{\partial}L(A).$

By the similar discussions as Proposition 3.1, Proposition 3.2, Proposition 3.3 in [LY1], we have that  the nonassociative superalgebra $\Lie (A\oplus B)$ is a $\Z$-graded Lie superalgebra. 

For the rest of this section 
we denote this Lie superalgebra by $\Lie$. 
Let $\rho$ be the projection map from $L(A\oplus B)$ to ${\mathcal{L}}$.
For $u\in A\oplus B,\; n\in \Z,$ set
$$u(n)=\rho(u\otimes t^{n})=u\otimes t^{n}+\hat{\partial}L(A)\in
{\mathcal{L}}$$ 
Define
\begin{eqnarray}
\Lie^{\ge 0}&=&\rho ((A\oplus B)\otimes \C[t])\subset \Lie,\\
\Lie^{< 0}&=&\rho ((A\oplus B)\otimes t^{-1}\C[t^{-1}])\subset\Lie.
\end{eqnarray}

Consider $\C$ as the trivial $\Lie^{\ge 0}$-module and then form the
following generalized Verma module
\begin{equation}
V_{\Lie}=U(\Lie)\otimes _{U(\Lie^{\ge 0})}\C,
\end{equation}
where $U(\Lie)$ be the universal enveloping algebra of  $\Lie$.

Recall the Poincar\'{e}-Birkhoff-Witt theorem of Lie superalgebra (Refs.\cite{Ro} and \cite {M}).
\bt{PBW}Let $g=g_{\bar{0}}\oplus g_{\bar{1}}$ be a Lie superalgebra. Let $\{x_1, x_2,\cdots, x_p\}$ be a basis for $g_{\bar{0}}$ and $\{y_1, y_2,\cdots, y_q\}$ be a basis for $g_{\bar{1}}$. The set $$\{x_1^{r_1}x_2^{r_2}\cdots x_p^{r_p}y_1^{s_1}y_2^{s_2}\cdots y_q^{s_q}\mid r_1, r_2,\cdots, r_p\in\N, s_1, s_2,\cdots, s_q\in\{0, 1\}\}$$
is a basis for $U(g)$.
 \et

In view of the Poincar\'{e}-Birkhoff-Witt theorem, we
have
\begin{eqnarray}\label{eva-pbw}
V_{\Lie}=U(\Lie^{<0}),
\end{eqnarray}
as a vector space, so that we may consider $A\oplus
B$ as a subspace:
\begin{eqnarray}\label{eidentificationAB}
A\oplus B\rightarrow V_{\Lie};\;\; a+b\mapsto
a(-1){\bf 1}+b(-1){\bf 1}.
\end{eqnarray}
We set ${\bf 1}=1\in V_{\Lie}$ and $\deg \C=0$. Then
$V_{\Lie}$ is an $\N$-graded $\Lie$-module:
\begin{eqnarray}\label{egrading-valie}
V_{\Lie}=\coprod_{n\in\N}(V_{\Lie})_{(n)},
\end{eqnarray}
and  there exists a unique vertex superalgebra structure on $V_{\Lie}$. ( Refs.\cite{flm}, \cite{li-y} and \cite{ll}.)

\bl{lprepare}
Set
\begin{eqnarray}
E_{0}&=&{\rm span} \{ e-{\bf 1},\; a(-1)a'-aa'\;|\; a,a'\in A\}
\subset (V_{\Lie})_{(0)},\\
E_{1}&=&{\rm span} \{ a(-1)b-ab\;|\; a\in A,\; b\in B\}
\subset (V_{\Lie})_{(1)},\\
E&=&E_{0}\oplus E_{1}\\
&=&{\rm span} \{ e-{\bf 1},\; a(-1)a'-aa',\;a(-1)b-ab\;|\; a,a'\in A,\; b\in
B\}\subset V_{\Lie}.\nonumber
\end{eqnarray}
Then $v(n)E\subset E$, for $v\in C=A\oplus B,\; n\ge 0.$
Furthermore, we have
$\D E_{0}\subset E_{1}$, $B(-1)E_{0}\subset A(-1)E_{1}+E_{1}.$
\el

\begin{proof} 
 Using the similar method of Lemma 4.2 in \cite{li-y}, we can prove the first assertion and $\D E_{0}\subset E_{1}$.
 Finally, we prove the last assertion.
For $b\in B$, we have
\begin{eqnarray*}
b(-1)(e-{\bf 1})=b(-1)e-b=e(-1)b-\D e(0)b-eb=e(-1)b-eb\in E_{1},
\end{eqnarray*}
using the fact that $e(n)=0$ for $n\ne -1$.
Let $b\in B,\; a,a'\in A$. Using Borcherds' super commutator formula (\ref{bor}), the $\D$-bracket
formula (\ref{D1}) and (\ref{D2}), and the fact that $\D E_{0}\subset E_{1}$, we have
\begin{eqnarray*}
& &b(-1)(a(-1)a'-aa')\\
&=&\varepsilon_{a, b}a(-1)b(-1)a'+(b_{0}a)(-2)a'-b(-1)(aa')\\
&=&\varepsilon_{a, b}\varepsilon_{a', b}a(-1)a'(-1)b-\varepsilon_{a, b}\varepsilon_{a', b}a(-1)\D a'(0)b +\D (b_{0}a)(-1)a'-(b_{0}a)(-1)\D a'\\
& &\ \ \ \ -\varepsilon_{a, b}\varepsilon_{a', b}(aa')(-1)b+\varepsilon_{a, b}\varepsilon_{a', b}\D (aa')(0)b\\
&=&\varepsilon_{a, b}\varepsilon_{a', b}a(-1)a'(-1)b-\varepsilon_{a, b}\varepsilon_{a', b}a(-1)\D a'(0)b +\D (b_{0}a)(-1)a'-(b_{0}a)(-1)\D a'\\
& &\ \ \ \ -\varepsilon_{a, b}\varepsilon_{a', b}(aa')(-1)b-\D b_{0}(aa')\\
&=&\varepsilon_{a, b}\varepsilon_{a', b}a(-1)a'(-1)b-\varepsilon_{a, b}\varepsilon_{a', b}a(-1)\D a'(0)b +\D (b_{0}a)(-1)a'-(b_{0}a)(-1)\D a'\\
& &\ \ \ \ -\varepsilon_{a, b}\varepsilon_{a', b}(aa')(-1)b-\D (\varepsilon_{a, b}a(b_{0}a')+(b_{0}a)a')\\
&=&\varepsilon_{a, b}\varepsilon_{a', b}a(-1)a'(-1)b-\varepsilon_{a, b}\varepsilon_{a', b}a(-1)\D a'(0)b +\D (b_{0}a)(-1)a'-(b_{0}a)(-1)\D a'\\
& &\ \ \ \ -\varepsilon_{a, b}\varepsilon_{a', b}(aa')(-1)b-\varepsilon_{a, b}a\D (b_{0}a')-\varepsilon_{a, a'}(b_{0}a')\D a-\D ((b_{0}a)a')\\
&=&\varepsilon_{a, b}\varepsilon_{a', b}a(-1)a'(-1)b -(b_{0}a)(-1)\D a'-\varepsilon_{a, b}\varepsilon_{a', b}(aa')(-1)b-\varepsilon_{a, a'}(b_{0}a')\D a\\
& &+\varepsilon_{a, b}\left(a(-1)\D b_{0}a'-a\D (b_{0}a')\right)
+\D \left((b_{0}a)(-1)a'-(b_{0}a)a'\right)\\
&\equiv& \varepsilon_{a, b}\varepsilon_{a', b}a(-1)a'(-1)b -(b_{0}a)\D a'-\varepsilon_{a, b}\varepsilon_{a', b}(aa')b-\varepsilon_{a, a'}(b_{0}a')\D a\;\;\;\mbox{mod }E_{1}\\
&=&\varepsilon_{a, b}\varepsilon_{a', b}a(-1)a'(-1)b -\varepsilon_{a, b}\varepsilon_{a', b}a(a'b)\\
&=& \varepsilon_{a, b}\varepsilon_{a', b}a(-1)(a'(-1)b-a'b)+\varepsilon_{a, b}\varepsilon_{a', b}\left(a(-1)(a'b)-a(a'b)\right)\\
&\in& A(-1)E_{1}+E_{1}.
\end{eqnarray*}
This proves that $B(-1)E_{0}\subset A(-1)E_{1}+E_{1}$.
\end{proof}

Define $I_{B}=U(\Lie)\C[\D]E.$
Using the proof of Proposition 4.4 in \cite{li-y}, we have that $I_{B}$ is a two-sided graded ideal of $V_{\Lie}$. Define $V_{B}=V_{\Lie}/I_{B}.$

Using similar discussion of \cite{gms} and \cite{li-y}, we have 
\bt{tA-B}
Let $B$ be a vertex $A$-superalgebroid and let $V_{B}$ be the 
associated $\N$-graded vertex superalgebra. 
We have $(V_{B})_{(0)}=A$ and $(V_{B})_{(1)}=B$ (under the linear map
$v\mapsto v(-1){\bf 1}$) and
$V_{B}$ as a vertex superalgebra is generated by $A\oplus B$.
Furthermore, for any $n\ge 1$,
\begin{eqnarray*}
& &(V_{B})_{(n)}\\
&=&{\rm span}\{b_1(-n_1)...b_k(-n_k){\bf 1}\;|\;b_i\in B,\;
n_1\geq n_2\geq ...\geq n_k\geq 1,\; n_1+...+n_k=n\}.
\end{eqnarray*}
\et

\section{ Semi-conformal structure on vertex superalgebra $V_{B}$}
In this section, we study semi-conformal structure on the vertex superalgebra $V_{B}$
 associated to a vertex $A$-superalgebroid $B$.

Recall that the Virasoro algebra $Vir$ is a Lie algebra  with a basis 
$\{ L_{n}\ |\ n\in \Z\}\cup \{ c\}$, where 
\begin{eqnarray}
[L_m,L_n]=(m-n)L_{m+n}+\frac{1}{12}(m^3-m)\delta_{m+n,0}c
\end{eqnarray}
for $m,n\in \Z$, and $c$ is central. 
We denote by $L(n)$ the operator on any $Vir$-module corresponding to $L_n$ for $n\in \Z$.

Set
\begin{eqnarray}
Vir^+={\rm span}\{L_m \ |\  m\ge -1\},
\end{eqnarray}
which  is a subalgebra of the Virasoro algebra $Vir$. The elements $L_{\pm 1}$ and $L_0$ span a 
subalgebra, which is isomorphic to  the  Lie algebra $sl_{2}$, where 
\begin{equation*}
[L_0,L_{\pm 1}]=\mp L_{\pm 1}, \  \   \  \  [L_1,L_{-1}]=2L_0.
\end{equation*}
Let $\mathfrak{b}$ denote the $2$-dimensional subalgebra of $Vir^{+}$, spanned by $L_0$ and $L_1$.
By a {\em weight $\mathfrak{b}$-module} we mean a $\mathfrak{b}$-module on which $L(0)$ acts semisimply.

For convenience, we set $\Z_{\ge -1}=\{ m\in \Z\ |\; m\ge -1\}.$

\begin{de}
{\em A {\em 1-truncated $sl_{2}$-module} is a weight $\mathfrak{b}$-module $U=U_0\oplus U_1$,
where $ L(0)|_{U_0}=0,  \  L(0)|_{U_1}=1$, equipped with a linear map
$L(-1)\in {\rm Hom} (U_0,U_1)$.}
\end{de}

Note that for any  weight $\mathfrak{b}$-module $U=U_0\oplus U_1$,  we have   
\begin{eqnarray*}
  L(1): U_1\longrightarrow U_0, \   \  L(1)|_{U_0}=0.
\end{eqnarray*}

\begin{lem}\label{general}
Let $U$ be a $\mathfrak{b}$-module. Set $L(U)=U\otimes \C[t,t^{-1}]$.
Then  $L(U)$ is a $Vir^+$-module with the action given by
\begin{eqnarray}\label{module-action}
L(m)* (u\otimes t^n)&=&-(m+n+1)(u\otimes t^{m+n})+(m+1)(L(0)u\otimes t^{m+n})\nonumber\\
&&\   +  \frac{1}{2}m(m+1) (L(1)u\otimes t^{m+n-1})
\end{eqnarray}
for $m\in \Z_{\ge -1},\  u\in U,\ n\in \Z$.
\end{lem}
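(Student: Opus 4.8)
The plan is to verify directly that the formula in (\ref{module-action}) satisfies the defining bracket relation of $Vir^+$, namely that
\[
\bigl[L(m),L(k)\bigr]*(u\otimes t^n)=(m-k)\,L(m+k)*(u\otimes t^n)
\]
for all $m,k\in\Z_{\ge -1}$, $n\in\Z$, $u\in U$. (The central term $\frac{1}{12}(m^3-m)\delta_{m+k,0}c$ never contributes here: if $m+k=0$ with $m,k\ge-1$, then $\{m,k\}\subseteq\{-1,0,1\}$, and one checks $m^3-m=0$ in each such case, so $Vir^+$ really acts through its quotient by the center.) First I would record the three ``pieces'' of the action: write $L(m)*(u\otimes t^n)=\alpha(m,n)\,u\otimes t^{m+n}+\beta(m)\,L(0)u\otimes t^{m+n}+\gamma(m)\,L(1)u\otimes t^{m+n-1}$ with $\alpha(m,n)=-(m+n+1)$, $\beta(m)=m+1$, $\gamma(m)=\tfrac12 m(m+1)$, and keep in mind the two structural facts from the paragraph before the lemma: $L(1)$ maps $U_1\to U_0$ and kills $U_0$, while $L(0)$ is the grading operator, so $L(0)L(1)u$ vanishes and $L(1)L(0)u=L(1)u$; also $[L(0),L(1)]=-L(1)$ as operators on $U$, which is the only $\mathfrak b$-relation available and the only one we are entitled to use.

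Next I would compute $L(m)*\bigl(L(k)*(u\otimes t^n)\bigr)$ by applying the formula termwise. Because $L(1)L(1)=0$ (it lands in $U_0$, which $L(1)$ annihilates) and $L(0)$ acts as a scalar ($0$ or $1$) on each homogeneous component, the composite collapses to a manageable sum of terms of the form $(\text{scalar in }m,k,n)\cdot u\otimes t^{m+k+n}$, $(\cdots)\cdot L(0)u\otimes t^{m+k+n}$, and $(\cdots)\cdot L(1)u\otimes t^{m+k+n-1}$; at most one term with $L(1)L(0)u=L(1)u$ and no surviving $L(1)L(1)$ term. Then I would subtract the analogous expression with $m$ and $k$ interchanged, and match coefficients of the three independent ``types'' $u\otimes t^{m+k+n}$, $L(0)u\otimes t^{m+k+n}$, $L(1)u\otimes t^{m+k+n-1}$ against the corresponding coefficients in $(m-k)L(m+k)*(u\otimes t^n)=(m-k)\bigl(\alpha(m+k,n)\,u\otimes t^{m+k+n}+\beta(m+k)\,L(0)u\otimes t^{m+k+n}+\gamma(m+k)\,L(1)u\otimes t^{m+k+n-1}\bigr)$. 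This reduces the whole lemma to three scalar polynomial identities in $m,k,n$, each verifiable by elementary algebra.

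I should also check the edge of the range $m\ge-1$: the factor $m+1$ in $\beta$ and the factor $m(m+1)$ in $\gamma$ make $L(-1)*(u\otimes t^n)=-n\,(u\otimes t^{n-1})$, with the $L(0)$- and $L(1)$-pieces silently vanishing, which is exactly what makes $\C[t,t^{-1}]$-translation compatible with the Virasoro bracket at $m=-1$; this is automatically subsumed in the polynomial identities above (the identities hold for all integers $m,k$, not just $m,k\ge-1$), so no separate argument is needed, but it is worth remarking that the formula is well-defined precisely because these vanishings occur. I would also note in passing that the action is $\C[t,t^{-1}]$-semilinear in the obvious sense so that $L(U)$ is genuinely a module over the smash-type structure used later, though strictly only the $Vir^+$-module axiom is being asserted here.

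The main obstacle is purely bookkeeping: the $\gamma$-term carries a $t^{m+k+n-1}$ shift, so when $L(m)$ is applied to the output of $L(k)$, the ``$n$'' fed into $L(m)$'s formula is $k+n-1$ for that piece but $k+n$ for the other two, and one must be scrupulous about which exponent goes where before antisymmetrizing. The coefficient of $L(1)u\otimes t^{m+k+n-1}$ collects contributions from $\alpha(m,\cdot)\gamma(k)$, $\beta(m)\gamma(k)$ (via $L(0)L(1)=0$, so this one drops), $\gamma(m)\alpha(k,\cdot)$, and $\gamma(m)\beta(k)$ (via $L(1)L(0)u=L(1)u$), and verifying that $\bigl[\gamma(m)\beta(k)-\gamma(m)(k+n)\bigr]-\bigl[\gamma(k)\beta(m)-\gamma(k)(m+n)\bigr]$ — after inserting the correct shifted arguments — equals $(m-k)\gamma(m+k)$ is the one computation where a sign or an off-by-one slip is most likely; I would do that coefficient last and most carefully, using $\gamma(m)=\tfrac12(m^2+m)$ to keep the polynomial arithmetic transparent.
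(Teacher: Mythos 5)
Your overall strategy is the same as the paper's: a direct computation of $L(m)*\bigl(L(k)*(u\otimes t^n)\bigr)$, antisymmetrization in $m,k$, and comparison with $(m-k)L(m+k)*(u\otimes t^n)$ (your remark that the central term never contributes for $m,k\ge -1$ is correct and is why $Vir^+$ is a genuine Lie subalgebra). However, there is a gap in how you simplify the composite. The lemma hypothesizes only that $U$ is a $\mathfrak{b}$-module, and you yourself note that the only relation you are entitled to use is $[L(0),L(1)]=-L(1)$; but you then invoke $L(1)L(1)=0$, $L(0)L(1)u=0$, $L(1)L(0)u=L(1)u$, and $L(0)$ acting by the scalars $0,1$. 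These facts come from the preceding paragraph about weight $\mathfrak{b}$-modules of the form $U=U_0\oplus U_1$ and are not consequences of the $\mathfrak{b}$-module structure alone: for instance $U=\C[x]$ with $L(0)=x\,d/dx$, $L(1)=d/dx$ satisfies $[L(0),L(1)]=-L(1)$ but has $L(1)^2\neq 0$ and $L(0)$-eigenvalues $0,1,2,\dots$. So the step ``the composite collapses to the three types $u$, $L(0)u$, $L(1)u$'' fails in the stated generality, and as written your argument only proves the lemma for weight modules with weights in $\{0,1\}$ (which, to be fair, is the only case the paper later uses, in Lemma 3.3 with $U=A\oplus B$).

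The repair is exactly what the paper's proof does: keep $L(0)^2u$, $L(1)L(0)u$, $L(0)L(1)u$, $L(1)L(1)u$ as formal terms. In the antisymmetrized difference the $L(0)^2$-terms and the $L(1)L(1)$-terms have coefficients symmetric in $m,k$ (namely $(m+1)(k+1)$ and $\tfrac14 m(m+1)k(k+1)$) and cancel outright, while the mixed terms combine into $\tfrac12(m-k)(m+1)(k+1)\,[L(1),L(0)]u\otimes t^{m+k+n-1}$, and only at this point does one use the single relation $[L(1),L(0)]=L(1)$. After that, the remaining bookkeeping is the polynomial identity checking you describe (and your coefficient computations, e.g.\ $(m+k)\bigl(\gamma(m)-\gamma(k)\bigr)=(m-k)\gamma(m+k)$, are consistent with the paper's). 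With that adjustment your proof becomes the paper's proof; without it, the lemma as stated is not established.
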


\begin{proof} Let $m,n\ge -1$ and $u\in U,\ r\in \Z$. Then we need to prove 
$$[L(m), L(n)]* (u\otimes t^{r})=L(m)* (L(n)*(u\otimes t^r))-L(n)* (L(m)* (u\otimes t^r)).$$
Indeed, by definition, we have
\begin{eqnarray*}
&&L(m)* (L(n)* (u\otimes t^r))\\
&=&-(n+r+1)L(m)* (u\otimes t^{n+r})+(n+1)L(m)* (L(0)u\otimes t^{n+r})\\
&& +  \frac{1}{2}n(n+1) L(m)*  (L(1)u\otimes t^{n+r-1})\\
&=&(n+r+1)(m+n+r+1) (u\otimes t^{m+n+r})\\
&& -(n+r+1)(m+1) (L(0)u\otimes t^{m+n+r})\\
&&-\frac{1}{2}m(m+1)(n+r+1)(L(1)u\otimes t^{m+n+r-1})\\
&&-(n+1)(m+n+r+1)(L(0)u\otimes t^{m+n+r})\\
&&+(n+1)(m+1)(L(0)^2u\otimes t^{m+n+r})\\
&&+\frac{1}{2}m(m+1)(n+1)(L(1)L(0)u\otimes t^{m+n+r-1})\\
&&- \frac{1}{2}n(n+1)(m+n+r)(L(1)u\otimes t^{m+n+r-1})\\
&&+ \frac{1}{2}n(n+1)(m+1)(L(0)L(1)u\otimes t^{m+n+r-1})\\
&&+ \frac{1}{4}n(n+1)m(m+1)(L(1)L(1)u\otimes t^{m+n+r-2}).
\end{eqnarray*}
On the other hand, by switching $m$ with $n$ we have
\begin{eqnarray*}
&&L(n)* (L(m)* (u\otimes t^r))\\
&=&(m+r+1)(m+n+r+1) (u\otimes t^{m+n+r})\\
&& -(m+r+1)(n+1) (L(0)u\otimes t^{m+n+r})\\
&&-\frac{1}{2}n(n+1)(m+r+1)(L(1)u\otimes t^{m+n+r-1})\\
&&-(m+1)(m+n+r+1)(L(0)u\otimes t^{m+n+r})\\
&&+(m+1)(n+1)(L(0)^2u\otimes t^{m+n+r})\\
&&+\frac{1}{2}n(n+1)(m+1)(L(1)L(0)u\otimes t^{m+n+r-1})\\
&&- \frac{1}{2}m(m+1)(m+n+r)(L(1)u\otimes t^{m+n+r-1})\\
&&+ \frac{1}{2}m(m+1)(n+1)(L(0)L(1)u\otimes t^{m+n+r-1})\\
&&+ \frac{1}{4}n(n+1)m(m+1)(L(1)L(1)u\otimes t^{m+n+r-2}).
\end{eqnarray*}
Then we obtain
\begin{eqnarray*}
&&L(m)* (L(n)* (u\otimes t^r))-L(n)* (L(m)* (u\otimes t^r))\\
&=&(n-m)(m+n+r+1) (u\otimes t^{m+n+r})\\
&&+(m(m+1)-n(n+1))(L(0)u\otimes t^{m+n+r})\\
&&+\frac{1}{2} ( m(m+1)(m-1)-n(n+1)(n-1))(L(1)u\otimes t^{m+n+r-1})\\
&&+\frac{1}{2}(m-n)(m+1)(n+1) (L(1)L(0)u\otimes t^{m+n+r-1})\\
&&+\frac{1}{2}(n-m)(m+1)(n+1) (L(0)L(1)u\otimes t^{m+n+r-1})\\
&=&(n-m)(m+n+r+1) (u\otimes t^{m+n+r})\\
&&+(m-n)(m+n+1)(L(0)u\otimes t^{m+n+r})\\
&&+\frac{1}{2} (m-n)(m^2+mn+n^2-1)(L(1)u\otimes t^{m+n+r-1})\\
&&+\frac{1}{2}(m-n)(m+1)(n+1) ([L(1),L(0)]u\otimes t^{m+n+r-1})\\
&=&(n-m)(m+n+r+1) (u\otimes t^{m+n+r})\\
&&\ \ +(m-n) (m+n+1)(L(0)u\otimes t^{m+n+r})\\
&&\ \ +\frac{1}{2}(m+n)(m+n+1)(m-n) (L(1)u\otimes t^{m+n+r-1})\\
&=&(m-n)L(m+n)* (u\otimes t^{r})\\
&=&[L(m), L(n)]*(u\otimes t^{r}),
\end{eqnarray*}
as desired. 
Therefore, $L(U)$ is a  $Vir^+$-module.
\end{proof}

For the rest of this section, we assume that $B$ is a vertex $A$-superalgebroid and $V_{B}$ is 
the associated $\N$-graded vertex superalgebra with $(V_{B})_{(0)}=A$ and $(V_{B})_{(1)}=B$. 
Recall that $V_{B}$ as a vertex superalgebra is generated by $A\oplus B$. 

\begin{lem}\label{derivation-action-Lie-algebra}
Let $B$ be a vertex $A$-superalgebroid equipped with a weight $\mathfrak{b}$-module structure
 on $A\oplus B$ with $L(0)|_A=0$ and $L(0)|_{B}=1$ such that $L(1)(\partial A)=0$.
Then the $Vir^+$-module structure on $L(A\oplus B)$ obtained in Lemma \ref{general}  
reduces to a $Vir^+$-module structure on ${\mathcal{L}}(A\oplus B):=L(A\oplus B)/\hat{\partial}L(A)$.
Furthermore, if in addition we assume
\begin{eqnarray}
L(1)(u_0v)=(L(1)u)_0v+u_0(L(1)v)\   \   \   \mbox{ for }u,v\in B,
\end{eqnarray}
then $Vir^+$ acts on ${\mathcal{L}}(A\oplus B)$ as a  Lie algebra of derivations.
\end{lem}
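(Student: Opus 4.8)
The plan is to prove the two assertions in turn, both by reduction to the previously verified module structure on $L(A\oplus B)$ from Lemma~\ref{general}.

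For the first assertion, I would show $\hat{\partial}L(A)$ is a $Vir^+$-submodule of $L(A\oplus B)$, which is precisely the condition needed for the action to descend to the quotient ${\mathcal{L}}(A\oplus B)$. Concretely, for $a\in A$ and $n\in\Z$ one has $\hat{\partial}(a\otimes t^n)=(\partial a)\otimes t^n+na\otimes t^{n-1}$, and I would apply the formula \eqref{module-action} to each of these two terms. Here is where the hypotheses enter: since $L(0)|_A=0$, the middle term in \eqref{module-action} drops for $a\otimes t^{n-1}$, and since $L(0)|_B=1$ while $L(1)(\partial A)=0$, the formula applied to $(\partial a)\otimes t^n$ simplifies as well. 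I expect that after collecting terms, $L(m)*\hat{\partial}(a\otimes t^n)$ equals $\hat{\partial}$ applied to something in $L(A)$ — in fact one should be able to identify it with a combination of $\hat{\partial}((L(0)a)\otimes t^{\bullet})=0$-type expressions plus genuine $\hat\partial$-images; more precisely I anticipate $L(m)*\hat\partial(a\otimes t^n) = -(m+n)\,\hat\partial(a\otimes t^{m+n-1}) + (\text{terms that vanish by }L(0)|_A=0\text{ and }L(1)(\partial a)=0)$, exhibiting the submodule property. This computation is routine once the hypotheses are used, so I would state the key simplification and leave the arithmetic.

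For the second assertion, I would unwind what it means for $Vir^+$ to act ``as a Lie algebra of derivations'' on ${\mathcal{L}}(A\oplus B)$: one needs each $L(m)$ to act as a derivation of the Lie superalgebra bracket, i.e. $L(m)*[x,y]=[L(m)*x,y]+[x,L(m)*y]$ for $x,y\in{\mathcal{L}}(A\oplus B)$. Since the bracket on $L(A\oplus B)$ is defined through the products $u_0v$ and $u_1v$ of the $1$-truncated conformal superalgebra (formulas \eqref{edef-bracket-1}--\eqref{edef-bracket-3}), I would verify the derivation property on the generating elements $a\otimes t^m$ and $b\otimes t^n$ by a direct case check, reducing everything to identities on $A$ and $B$. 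The cases involving only $A$ are trivial since $a_i a'=0$; the mixed case uses $L(0)|_A=0$, $L(0)|_B=1$; and the case $[b\otimes t^m,b'\otimes t^n]=b_0b'\otimes t^{m+n}+m(b_1b')\otimes t^{m+n-1}$ is the substantive one. Expanding $L(r)*$ of this via \eqref{module-action} and comparing with $[L(r)*(b\otimes t^m),b'\otimes t^n]+[b\otimes t^m,L(r)*(b'\otimes t^n)]$, one must match the coefficients; the $u_0$-parts will require the new hypothesis $L(1)(u_0v)=(L(1)u)_0v+u_0(L(1)v)$, while the $L(0)$-compatibility $L(0)(u_0v)=u_0(L(0)v)$ needed for the $u_1$-parts should already follow from $L(0)|_B=1$ together with the grading $u_0v\in B$ (degree reasons), and similarly one needs to know how $L(1)$ interacts with $u_1v\in A$, which follows since $L(1)|_A=0$ forces the relevant terms to land in $A$.

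The main obstacle I foresee is the bookkeeping in this last case: formula \eqref{module-action} produces three terms for each factor, so $L(r)*[b\otimes t^m,b'\otimes t^n]$ against the two-term bracket yields many monomials in $L(0)$, $L(1)$ applied to $b_0b'$ and $b_1b'$, and one must see that all of them cancel in pairs or combine into the claimed expression using exactly the stated hypotheses and no more. I would organize this by separating the ``$u_0v$-contributions'' (which land in $B$) from the ``$u_1v$-contributions'' (which land in $A$, hence are killed by $L(0)$ and $L(1)$ in the appropriate slots), handle each separately, and only then quotient by $\hat\partial L(A)$ using the first assertion. I would present the structure of this argument and the key identity invocations, and suppress the full expansion.
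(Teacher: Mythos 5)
Your proposal is correct and follows essentially the same route as the paper: one first shows $\hat{\partial}L(A)$ is a $Vir^{+}$-submodule by a direct computation using $L(0)|_{A}=0$, $L(0)\partial a=\partial a$, $L(1)A=0$ and $L(1)(\partial A)=0$, and then checks the derivation identity on spanning elements $u\otimes t^{m}$ with $u\in A\cup B$, the only substantive case being $u,v\in B$, where the hypothesis $L(1)(u_{0}v)=(L(1)u)_{0}v+u_{0}(L(1)v)$ is used and the terms involving $L(0)(u_{1}v)$ and $L(1)(u_{1}v)$ vanish because $u_{1}v\in A$. One small correction to your anticipated closed form: the first computation actually gives $L(m)*\hat{\partial}(a\otimes t^{n})=-n\,\hat{\partial}(a\otimes t^{m+n})$ rather than $-(m+n)\,\hat{\partial}(a\otimes t^{m+n-1})$ (the latter has the wrong powers of $t$), but since you deferred the arithmetic and only membership in $\hat{\partial}L(A)$ is needed, this does not affect the argument.
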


\begin{proof}  For the first assertion, we need to show that $ \hat{\partial}L(A)$ is a $Vir^+$-submodule. 
Indeed, for $m\ge -1,\ a \in A,\ n\in \Z$, noticing that $L(0)\partial a=\partial a$, $L(0)a=0=L(1)a$, and 
$L(1){\partial}a=0$ (by assumption), we have
\begin{eqnarray*}
&&L(m)*\hat{\partial}(a\otimes t^{n})\\
&=&L(m)*({\partial}a\otimes t^{n}+na\otimes t^{n-1})\\
&=&-(m+n+1)({\partial}a\otimes t^{m+n})+(m+1)(L(0){\partial}a\otimes t^{m+n})+ \frac{1}{2}(m+1)m(L(1){\partial}a\otimes t^{m+n-1}) \\
&&-n(m+n)(a\otimes t^{m+n-1})+n(m+1)(L(0)a\otimes t^{m+n-1})
+\frac{1}{2}nm(m+1)(L(1)a\otimes t^{m+n-2})  \\
&=&-n({\partial}a\otimes t^{m+n})-n(m+n)a\otimes t^{m+n-1}\\
&=&-n\hat{\partial}(a\otimes t^{m+n}).
\end{eqnarray*}
This proves that $ \hat{\partial}L(A)$ is a $Vir^+$-submodule. 
Then, ${\mathcal{L}}(A+B)$ is naturally a $Vir^+$-module. 

Next, we show that $Vir^+$ acts on ${\mathcal{L}}(A\oplus B)$ as a derivation algebra. 
It suffices to prove
\begin{eqnarray}\label{derivation-proof}
L(s)*[u\otimes t^{m}, v\otimes t^{n}]=[L(s)*(u\otimes t^{m}), v\otimes t^{n}]+[u\otimes t^{m}, L(s)*(v\otimes t^{n})]
\end{eqnarray}
for $m, n, s\in \Z,\  u, v\in A\cup B$ with $s\ge -1$.

Note that for $a\in A$, as $L(0)a=0=L(1)a$, we have
$$L(p)*(a\otimes t^q)=-(p+q+1)(a\otimes t^{p+q})\   \  \   \mbox{ for }p,q\in \Z\  \mbox{ with }p\ge -1.$$
On the other hand, we have $[a\otimes t^p,a'\otimes t^q]=0$ for any $a,a'\in A,\ p,q\in \Z$.
We see that (\ref{derivation-proof}) holds for $u,v\in A$.

Consider the case with $u\in A, \ v\in B$. 
For $m, n, s\in \Z$ with $s\ge -1$, noticing that $u_0v\in A$ and $L(0)(u_0v)=0=L(1)(u_0v)$, we have
\begin{eqnarray*}
L(s)*[u\otimes t^{m}, v\otimes t^{n}]&=&L(s)*(u_0v\otimes t^{m+n})
=-(s+m+n+1)(u_0v)\otimes t^{m+n+s},\\
\ [L(s)*(u\otimes t^{m}), v\otimes t^{n}]&=&-(s+m+1)[u\otimes t^{m+s}, v\otimes t^{n}]\\
&=&-(s+m+1)(u_0v)\otimes t^{m+n+s},\\
\ [u\otimes t^{m}, L(s)*(v\otimes t^{n})]&=& [u\otimes t^{m}, -(s+n+1)(v\otimes t^{n+s})+(s+1) (L(0)v\otimes t^{n+s})\\
&&+\frac{1}{2}s(s+1) (L(1)v\otimes t^{n+s})]\\
&=&-(s+n+1)[u\otimes t^{m}, v\otimes t^{n+s}]+(s+1)[u\otimes t^{m}, v\otimes t^{n+s}]\\
&=&-n(u_0v)\otimes t^{m+n+s}.
\end{eqnarray*}
Then it implies that (\ref{derivation-proof}) holds.

For the case with $u\in B,\ v\in A$, the result follows from the skew-supersymmetry of Lie superalgebra.
 
Consider the case with $u, v\in B$.  We have 
\begin{eqnarray*}
&&[u\otimes t^{m}, L(s)*(v\otimes t^{n})]\\
&=&\Big[u\otimes t^{m}, -(n+s+1)(v\otimes t^{s+n})+(s+1)(L(0)v\otimes t^{s+n})\\
&&+\frac{1}{2}s(s+1) (L(1)v\otimes t^{s+n-1})\Big]\\
&=&-n\left[u\otimes t^{m}, v\otimes t^{s+n}\right]+\frac{1}{2}s(s+1)\left[u\otimes t^{m}, L(1)v\otimes t^{s+n-1}\right]
\\
&=&-n\Big(u_0v\otimes t^{m+n+s}+m(u_1v)\otimes t^{m+n+s-1}\Big)\\
&&+\frac{1}{2}s(s+1)u_0L(1)v\otimes t^{s+m+n-1},\\
\\
&&[L(s)*(u\otimes t^{m}), v\otimes t^{n}]\\
&=&[-(s+m+1)(u\otimes t^{s+m})+(s+1)(L(0)u\otimes t^{s+m})\\
&&+\frac{1}{2}s(s+1)(L(1)u\otimes t^{s+m-1}), v\otimes t^{n}]
\\
&=&[-(s+m+1)(u\otimes t^{s+m})+(s+1)(u\otimes t^{s+m})\\
&&+\frac{1}{2}s(s+1)(L(1)u\otimes t^{s+m-1}), v\otimes t^{n}]
\\
&=&-m\left(u_0v\otimes t^{m+n+s}+(s+m)(u_1v)\otimes t^{m+n+s-1}\right)\\
&&+\frac{1}{2}s(s+1)(L(1)u)_0v\otimes t^{s+m+n-1},\\
\\
&&L(s)*[u\otimes t^{m}, v\otimes t^{n}]\\
&=&L(s)*\left(u_0v\otimes t^{m+n}+mu_1v\otimes t^{m+n-1}\right)\\
&=&-(m+n+s+1)(u_0v)\otimes t^{m+n+s}+(s+1)L(0)(u_0v)\otimes t^{m+n+s}\\
&&+\frac{1}{2}s(s+1)L(1)(u_0v)\otimes t^{m+n+s-1}-m(m+n+s)u_1v\otimes t^{m+n+s-1}\\
&&+m(s+1)L(0)u_1v\otimes t^{m+n+s-1}
+\frac{1}{2}s(s+1)mL(1)u_1v\otimes t^{m+n+s-2}.
\end{eqnarray*}
As $L(1)(u_0v)=(L(1)u)_0v+u_0(L(1)v)$ by assumption, it follows that (\ref{derivation-proof}) holds.
\end{proof}

\begin{de}\label{def-semi-conformal}
{\em A {\em semi-conformal vertex superalgebra} is a vertex superalgebra $V$ equipped with 
a $Vir^{+}$-module structure satisfying the conditions that 
$V=\oplus_{n\in \Z}V_{(n)}$,  where $V_{(n)}=\{ v\in V\ |\  L(0)v=nv\}$ for $n\in \Z$, 
 $V_{(-n)}=0$ for $n$ sufficiently large, $L(-1)=\D$ on $V$, and that 
\begin{eqnarray}
[L(m), Y(v,x)]=\sum_{i\ge 0}\binom{m+1}{i} x^{m+1-i} Y(L(i-1)v,x)
\end{eqnarray}
for $m\in \Z_{\ge -1},\ v\in V$.}
\end{de}

Let $B$ be a vertex $A$-superalgebroid  as in Lemma \ref{derivation-action-Lie-algebra}
with all the assumptions.
Recall that for $u\in A+B,\ n\in \Z$,  $u(n)$ denotes the image of 
$u\otimes t^n$ in Lie superalgebra ${\mathcal{L}}(A+ B)=L(A+B)/\hat{\partial}L(A)$.
We have
\begin{align}\label{L(m)-on-u(n)}
&L(m)* u(n)\nonumber\\
=&-(m+n+1)u(m+n)+(m+1)(L(0)u)(m+n)+\binom{m+1}{2}(L(1)u)(m+n-1)
\end{align}
for $u\in A+B,\ m,n\in \Z$ with $m\ge -1$. 
We denote this Lie superalgebra by $\Lie$.
Recall that the vertex superalgebra $V_{\Lie}$ as an $\Lie$-module is the induced module
$$V_{\Lie}=U(\Lie)\otimes_{U(\Lie^{\ge 0})}\C,$$
where $\C$ is viewed as a trivial $\Lie^{\ge 0}$-module. 
The space $A\oplus B$ is identified with a subspace of $V_{\Lie}$ through the linear map
$u\mapsto u(-1){\bf 1}$.
For $u\in A+B$, we have
$$Y(u,x)=u(x)=\sum_{n\in \Z}u(n)x^{-n-1}.$$
Thus $u_{n}=u(n)$ on $V_{\Lie}$.

\begin{lem}\label{Vir-on-V-L}
Let $B$ be a vertex $A$-superalgebroid with a weight $\mathfrak{b}$-module structure
on $A\oplus B$ with $L(0)|_A=0$, $L(0)|_B=1$  such that $L(1)\partial A=0$ and
\begin{eqnarray*}
L(1)(u_0v)=(L(1)u)_0v+u_0(L(1)v)\   \   \   \mbox{ for }u,v\in B.
\end{eqnarray*}
 Then there exists a $Vir^+$-module structure on $V_{\Lie}$, which is uniquely determined by the condition
 that  $Vir^+\cdot {\bf 1}=0$ and
 \begin{align}
& [L(m),u(n)]\nonumber\\
=&-(m+n+1)u(m+n)+(m+1)(L(0)u)(m+n)+\binom{m+1}{2}(L(1)u)(m+n-1)
 \end{align}
 for $u\in A+B,\ m\in \Z_{\ge -1},\ n\in \Z$. Furthermore, the map $u\mapsto u(-1){\bf 1}$ is a $\mathfrak{b}$-module homomorphism with $L(-1)=\D$,   $L(-1)a=\partial a$ for $a\in A$, and $L(m)(A+B)=0$ for $m\ge 2$.
\end{lem}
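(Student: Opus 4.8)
The plan is to construct the $Vir^+$-action on $V_{\Lie}$ by transporting the derivation action on $\Lie = \mathcal{L}(A+B)$ established in Lemma \ref{derivation-action-Lie-algebra} to the induced module, and then to check the remaining clauses (the $\mathfrak{b}$-homomorphism property, $L(-1) = \D$, and $L(m)(A+B) = 0$ for $m \ge 2$) by direct computation using \eqref{L(m)-on-u(n)}. The conceptual point is that $Vir^+$ acting on $\Lie$ by derivations is the same as a Lie algebra homomorphism $Vir^+ \to \Der(\Lie)$; since $\Der(\Lie)$ acts naturally on $U(\Lie)$ (extending to an action on the enveloping algebra via $D(xy) = D(x)y + xD(y)$), and since each $L(m)$ with $m \ge -1$ preserves $\Lie^{\ge 0}$ (this must be checked from \eqref{L(m)-on-u(n)}: for $n \ge 0$ the terms $u(m+n)$, $(L(0)u)(m+n)$, $(L(1)u)(m+n-1)$ all lie in $\Lie^{\ge 0}$ because $m+n \ge 0$ and $m+n-1 \ge -1$, and $L(1)u \in A$ so $(L(1)u)(-1) = \partial(\text{something})$... actually $(L(1)u)(m+n-1)$ for $m+n-1 = -1$ needs care since $A(-1)$ is genuinely present — but $m \ge -1$ and $n \ge 0$ force $m+n-1 \ge -1$, and when it equals $-1$ we have $m = 0, n = 0$; here $\binom{m+1}{2} = 0$, so that term vanishes), the action descends to $V_{\Lie} = U(\Lie) \otimes_{U(\Lie^{\ge 0})} \C$. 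Uniqueness is immediate: $V_{\Lie}$ is generated over $U(\Lie)$ by ${\bf 1}$, and the conditions $Vir^+ \cdot {\bf 1} = 0$ together with the bracket formula determine $L(m)$ on all of $V_{\Lie}$.

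Concretely I would proceed as follows. First, verify that the formula \eqref{L(m)-on-u(n)}, read as giving $\mathrm{ad}\,L(m) \in \Der(\Lie)$, indeed defines a Lie algebra homomorphism $Vir^+ \to \Der(\Lie)$ — this is exactly the content of Lemma \ref{general} (bracket relations) plus Lemma \ref{derivation-action-Lie-algebra} (derivation property), so it may be quoted. Second, check $L(m)\,\Lie^{\ge 0} \subseteq \Lie^{\ge 0}$ for $m \ge -1$ as indicated above, so that the derivation $\mathrm{ad}\,L(m)$ on $U(\Lie)$ together with $L(m) \cdot {\bf 1} = 0$ is compatible with the ideal defining the induced module; this gives a well-defined operator $L(m)$ on $V_{\Lie}$ satisfying $[L(m), u(n)] = L(m)*u(n)$. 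Third, for each pair $m, m'$ the operators $L(m), L(m')$ on $V_{\Lie}$ satisfy $[L(m),L(m')] = (m-m')L(m+m')$ because both sides agree on ${\bf 1}$ (giving $0$) and have the same commutator with every $u(n)$ (by the Jacobi identity in $\Der(\Lie)$), and operators agreeing on a generating set with matching brackets against the generating algebra coincide. Fourth, compute $L(-1)u(n)$: formula \eqref{L(m)-on-u(n)} with $m = -1$ gives $L(-1)*u(n) = -n\,u(n-1)$, which matches $[\D, u_n] = -n\,u_{n-1}$ from \eqref{D2}; since both $L(-1)$ and $\D$ kill ${\bf 1}$ and act the same way by commutators with the $u(n)$, we get $L(-1) = \D$ on $V_{\Lie}$. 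In particular $L(-1)u(-1){\bf 1} = u(-2){\bf 1} = \D u = (\partial a)(-1){\bf 1}$ when $u = a \in A$, giving $L(-1)a = \partial a$; and $L(0)u(-1){\bf 1} = u(-1){\bf 1} + (L(0)u)(-1){\bf 1}$... wait — applying \eqref{L(m)-on-u(n)} with $m=0, n=-1$ gives $L(0)*u(-1) = -0\cdot u(-1) + (L(0)u)(-1) + 0 = (L(0)u)(-1)$, hence $L(0)(u(-1){\bf 1}) = (L(0)u)(-1){\bf 1}$, so $u \mapsto u(-1){\bf 1}$ intertwines $L(0)$; likewise with $m=1,n=-1$: $L(1)*u(-1) = -u(0) + 2(L(0)u)(0) + (L(1)u)(-1)$, and $u(0){\bf 1} = 0$, $(L(0)u)(0){\bf 1} = 0$, so $L(1)(u(-1){\bf 1}) = (L(1)u)(-1){\bf 1}$, giving the $\mathfrak{b}$-homomorphism property. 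Finally, for $m \ge 2$, \eqref{L(m)-on-u(n)} with $n = -1$ gives $L(m)*u(-1) = -m\,u(m-1) + (m+1)(L(0)u)(m-1) + \binom{m+1}{2}(L(1)u)(m-2)$; since $m - 1 \ge 1 > 0$ and $m - 2 \ge 0$, all three terms annihilate ${\bf 1}$, so $L(m)(u(-1){\bf 1}) = 0$, i.e. $L(m)(A+B) = 0$.

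The main obstacle I anticipate is not any single hard computation but the bookkeeping in the second step: making sure that $L(m)$ genuinely preserves $\Lie^{\ge 0}$ (equivalently, that the quotient $L(A+B) \to \mathcal{L}(A+B)$ interacts correctly, which Lemma \ref{derivation-action-Lie-algebra} already handled at the level of $\hat\partial L(A)$), and — more delicately — that when one passes from "$\mathrm{ad}\,L(m)$ is a derivation of $\Lie$" to "$L(m)$ is a well-defined operator on the induced module $V_{\Lie}$," the trivial-module condition $Vir^+ \cdot \C = 0$ on $\Lie^{\ge 0}$ is consistent, i.e. $L(m)$ acts as zero on the one-dimensional $\Lie^{\ge 0}$-module $\C$. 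This is where one uses that $L(m)\,{\bf 1} = 0$ is being imposed and that it is compatible with $\Lie^{\ge 0}\,{\bf 1} = 0$; the check is that for $x \in \Lie^{\ge 0}$, $L(m)(x \cdot {\bf 1}) = [L(m),x]\cdot{\bf 1} + x\cdot L(m){\bf 1} = 0$ since $[L(m),x] \in \Lie^{\ge 0}$ and kills ${\bf 1}$. Everything else reduces to the PBW description $V_{\Lie} = U(\Lie^{<0})$ and repeated application of \eqref{L(m)-on-u(n)}, \eqref{D1}, \eqref{D2}.
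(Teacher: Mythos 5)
Your proposal is correct and follows essentially the same route as the paper: quote the derivation action of $Vir^+$ on $\Lie$ from Lemma \ref{derivation-action-Lie-algebra}, extend it to $U(\Lie)$ by derivations, observe from (\ref{L(m)-on-u(n)}) that $\Lie^{\ge 0}$ is preserved so the action descends to $V_{\Lie}=U(\Lie)/U(\Lie)\Lie^{\ge 0}$ with $Vir^+\cdot{\bf 1}=0$, and then verify $L(-1)=\D$, the $\mathfrak{b}$-homomorphism property, and $L(m)(A+B)=0$ for $m\ge 2$ by the same direct computations. The only nitpick is your preservation check of $\Lie^{\ge 0}$: besides the case $m=0,n=0$ you note, the borderline cases $m=-1$, $n\in\{0,1\}$ also produce indices below $0$, but there too the coefficients $-(m+n+1)$, $(m+1)$, $\binom{m+1}{2}$ all vanish, so the argument goes through exactly as you intend.
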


\begin{proof} Since $V_{\Lie}$ is an $\Lie$-module generated by ${\bf 1}$,
the uniqueness is clear. It remains to verify the existence.
Since $Vir^{+}$ acts on $\Lie$ as a Lie algebra of derivations, $Vir^{+}$ acts on
 the universal enveloping algebra $U(\Lie)$ of $\Lie$ as a Lie algebra of derivations. 
 Especially, we have $Vir^{+}\cdot 1=0$.
Note that $V_{\Lie}$ as a $U(\Lie)$-module is isomorphic to $U(\Lie)/J_B$, where $J_B=U(\Lie)\Lie^{\ge 0}$. 
From (\ref{L(m)-on-u(n)}) we see that the action of $Vir^{+}$ on $\Lie$ preserves $\Lie^{\ge 0}$.  
It follows that $J_B$ is a $Vir^{+}$-submodule.
 Consequently, $V_{\Lie}$ is a $Vir^{+}$-module such that $Vir^{+}\cdot {\bf 1}=0$ and
 \begin{eqnarray}
 &&L(m) u(n)w-u(n)L(m)w\nonumber\\
 &=&(L(m)*u(n))w\nonumber\\
 &=&-(m+n+1)u(m+n)w+(m+1)(L(0)u)(m+n)w\nonumber\\
 &&+\frac{1}{2}m(m+1)(L(1)u)(m+n-1)w
 \end{eqnarray}
for $u\in A+B,\ w\in V_{\Lie},\ m,n\in \Z$ with $m\ge -1$. More specifically, we have
  \begin{eqnarray}
  &&[L(m),a(n)]=-(m+n+1)a(m+n),\\
  &&[L(m),b(n)]= -nb(m+n)+\frac{1}{2}m(m+1)(L(1)b)(m+n-1)
  \end{eqnarray}
 on $V_{\Lie}$  for $a\in A,\ b\in B,\ m,n\in \Z$ with $m\ge -1$.  In particular, we have
 \begin{eqnarray}
 &&[L(-1),u(n)]=-nu(n-1),\\
 &&[L(0),a(n)]=-(n+1)a(n),\   \  \  \   [L(0),b(n)]=-n b(n)
 \end{eqnarray}
 for $u\in A+B,\  a\in A,\ b\in B,\ n\in \Z$. 
 
 Recall that $\D {\bf 1}=0$ and $[\D, u(n)]=-nu(n-1)$. For $a\in A$, we have
\begin{eqnarray*}
\D a=a_{-2}{\bf 1}=a(-2){\bf 1}=(\partial a)_{-1}{\bf 1}=\partial a.
\end{eqnarray*}
Since  $V_{\Lie}$ as an $\Lie$-module is generated by ${\bf 1}$ and $L(-1){\bf 1}=0$, it follows that 
 $L(-1)=\D$ on $V_{\Lie}$.  Similarly, as $L(0){\bf 1}=0$,  it follows that $L(0)$ is semisimple on $V_{\Lie}$
 with only integer eigenvalues. In particular,  we have
 \begin{eqnarray*}
&& L(0)a(-1){\bf 1}=a(-1)L(0){\bf 1}=0,\ L(0)b(-1){\bf 1}=  b(-1)L(0){\bf 1}+b(-1){\bf 1}=b(-1){\bf 1},\\
&& L(1)a(-1){\bf 1}=a(-1)L(1){\bf 1}-a(0){\bf 1}=0,\\ 
&&L(1)b(-1){\bf 1}=  b(-1)L(1){\bf 1}+b(0){\bf 1}+(L(1)b)(-1){\bf 1}=(L(1)b)(-1){\bf 1}.
\end{eqnarray*}
This proves that the map $u\mapsto u(-1){\bf 1}$ is a $\mathfrak{b}$-module homomorphism.
Furthermore, using the commutation relations of $L(m)$ with $a(n)$ and $b(n)$, we get $L(m)(A+B)=0$ for $m\ge 2$.
This completes the proof.
\end{proof}


As the main result of this section,  we have: 

\begin{thm}
Let $B$ be a vertex $A$-superalgebroid with a weight $\mathfrak{b}$-module structure
on $A\oplus B$ with $L(0)|_A=0$, $L(0)|_B=1$  such that $L(1)\partial A=0$ and
\begin{eqnarray}
L(1)(u_0v)=(L(1)u)_0v+u_0(L(1)v)\   \   \   \mbox{ for }u,v\in B.\label{de}\\
 L(1)(ab)=aL(1)b-a_0b \  \  \  \mbox{ for  }a\in A,\  b\in B.\label{xyzw}
 \end{eqnarray}
Then the $\mathfrak{b}$-module structure on $A+B$ can be extended to a $Vir^+$-module structure on $V_{B}$, 
which is uniquely determined by
\begin{eqnarray}
&&L(-1)=\D, \\  
&&[L(m), Y(u,x)]=\sum_{i=0}^{2}\binom{m+1}{i} x^{m+1-i} Y(L(i-1)u,x)\label{L(m)-Yv-bracket}
\end{eqnarray} 
for $u\in A+B, \  m\in \Z_{\ge -1}$.
Furthermore, $V_{B}$ is a semi-conformal vertex superalgebra.
\end{thm}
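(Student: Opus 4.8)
The plan is to transport the $Vir^{+}$-module structure on $V_{\Lie}$ furnished by Lemma~\ref{Vir-on-V-L} --- all of whose hypotheses are among those of the theorem, condition~(\ref{xyzw}) being the extra ingredient used only below --- down to the quotient $V_{B}=V_{\Lie}/I_{B}$, and then to check the three requirements of Definition~\ref{def-semi-conformal}. Since $I_{B}=U(\Lie)\C[\D]E$ with $\D=L(-1)$ on $V_{\Lie}$ and $V_{B}$ already carries its vertex superalgebra structure, the only thing to establish for the first step is that $I_{B}$ is a $Vir^{+}$-submodule, i.e.\ $L(m)I_{B}\subseteq I_{B}$ for every $m\ge-1$.

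I would first reduce this to a single computation. Because $Vir^{+}$ acts on $U(\Lie)$ as a Lie algebra of derivations compatibly with its action on $V_{\Lie}$ (as in the proof of Lemma~\ref{Vir-on-V-L}), one has $L(m)(xw)=[L(m),x]\,w+x\,L(m)w$ with $[L(m),x]\in U(\Lie)$ for $x\in U(\Lie)$; combined with $\D I_{B}\subseteq I_{B}$ (as $I_{B}$ is an ideal), this shows it suffices to prove $L(m)\bigl(\C[\D]E\bigr)\subseteq I_{B}$. Commuting $L(m)$ past powers of $\D=L(-1)$ via $[L(m),L(-1)]=(m+1)L(m-1)$ and using $\D I_{B}\subseteq I_{B}$ again, a straightforward induction on $m$ and on the power of $\D$ reduces matters to $L(m)E\subseteq I_{B}$ for all $m\ge-1$. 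Now $V_{\Lie}$ is $\N$-graded with $L(0)$ the grading operator (Lemma~\ref{Vir-on-V-L}) and $L(m)$ lowers degree by $m$; since $E_{0}$ lies in degree $0$ and $E_{1}$ in degree $1$, we get $L(m)E_{0}=0$ for $m\ge1$, $L(m)E_{1}=0$ for $m\ge2$, while $L(-1)E\subseteq\C[\D]E\subseteq I_{B}$ and $L(0)E_{0}=0$, $L(0)E_{1}=E_{1}\subseteq I_{B}$ are immediate. Only the case $L(1)E_{1}$ remains.

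This is where (\ref{xyzw}) enters, and it is the heart of the argument. For $a\in A$, $b\in B$, writing (as in Lemma~\ref{lprepare}) $a(-1)b$ for $a(-1)b(-1){\bf 1}$ and $ab$ for $(ab)(-1){\bf 1}$, and using $[L(1),a(-1)]=-a(0)$, the Borcherds commutator $[a(0),b(-1)]{\bf 1}=(a_{0}b)(-1){\bf 1}$, the fact that $u\mapsto u(-1){\bf 1}$ is a $\mathfrak{b}$-module homomorphism (Lemma~\ref{Vir-on-V-L}), and $(ab)(0){\bf 1}=0$, one computes in $V_{\Lie}$
\[
L(1)\bigl(a(-1)b-ab\bigr)=-(a_{0}b)(-1){\bf 1}+a(-1)(L(1)b)(-1){\bf 1}-(L(1)(ab))(-1){\bf 1}.
\]
Substituting $L(1)(ab)=a\,L(1)b-a_{0}b$ from (\ref{xyzw}) (note $L(1)b\in A$), the two $a_{0}b$-terms cancel and the right-hand side becomes $a(-1)(L(1)b)(-1){\bf 1}-(a\,L(1)b)(-1){\bf 1}$, which is of the form $a(-1)a'-aa'$ with $a'=L(1)b\in A$, hence lies in $E_{0}\subseteq I_{B}$. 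Thus $L(1)E_{1}\subseteq I_{B}$, so $I_{B}$ is a $Vir^{+}$-submodule and $V_{B}$ inherits a $Vir^{+}$-module structure.

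It then remains to verify the axioms of Definition~\ref{def-semi-conformal}. The $L(0)$-eigenspace decomposition $V_{B}=\coprod_{n\ge0}(V_{B})_{(n)}$ and the identity $L(-1)=\D$ pass to $V_{B}$ from $V_{\Lie}$, and $(V_{B})_{(-n)}=0$ for $n\ge1$ since $V_{B}$ is $\N$-graded. For the commutator formula, on a generator $u\in A+B$ the component relation for $[L(m),u(n)]$ from Lemma~\ref{Vir-on-V-L}, after rewriting $(\D u)(k)=-k\,u(k-1)$ and using $L(j)(A+B)=0$ for $j\ge2$, is exactly the coefficient form of
\[
[L(m),Y(u,x)]=\sum_{i\ge0}\binom{m+1}{i}x^{m+1-i}Y(L(i-1)u,x),
\]
which also holds trivially for $u={\bf 1}$; since the set of $v\in V_{B}$ satisfying this identity (for all $m\ge-1$) is closed under the products $u_{n}(\cdot)$ by the standard bootstrapping argument with the iterate formula and the super Jacobi identity (cf.\ \cite{ll},\cite{li-y}) and contains the generating set $A+B\cup\{{\bf 1}\}$, it holds on all of $V_{B}$, so $V_{B}$ is semi-conformal. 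Uniqueness follows from $V_{B}$ being generated by $A+B$: one has $L(m){\bf 1}=0$ (using $L(-1){\bf 1}=\D{\bf 1}=0$, the identification $e={\bf 1}$ in $V_{B}$, and the $Vir^{+}$-relations) and $L(m)$ on $A+B$ is pinned down by $L(-1)=\D$ and the commutator formula, so the action on each spanning monomial $u_{1}(n_{1})\cdots u_{k}(n_{k}){\bf 1}$ is determined. The main obstacle in all of this is the $Vir^{+}$-stability of $I_{B}$: the reductions above are routine bookkeeping but must be organized with care, and the entire weight of the hypothesis rests on the single computation of $L(1)(a(-1)b-ab)$, where condition~(\ref{xyzw}) --- and nothing weaker --- is precisely what forces the answer into $E_{0}$; the propagation of the commutator formula from the generators to all of $V_{B}$ is standard and is best cited rather than re-derived.
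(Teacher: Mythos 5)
Your proposal is correct and follows essentially the same route as the paper's proof: transport the $Vir^{+}$-module structure on $V_{\Lie}$ from Lemma \ref{Vir-on-V-L} to $V_{B}$ by showing $I_{B}$ is $Vir^{+}$-stable, with the only nontrivial case being $L(1)(a(-1)b-ab)$, which you resolve exactly as the paper does using (\ref{xyzw}) to land in $E_{0}$, and then extend the commutator formula from ${\bf 1}$ and $A+B$ to all of $V_{B}$ by a generation/closure argument. The only cosmetic differences are that you dispose of the remaining cases $L(m)E$ by the degree argument ($L(m)$ lowers degree by $m$ on the $\N$-graded $V_{\Lie}$) where the paper computes them directly from the bracket relations, and you cite the standard closure-under-vertex-operations step that the paper verifies explicitly via the Jacobi identity and the binomial identity $\binom{m+1}{r+j}\binom{r+j}{r}=\binom{m+1}{r}\binom{m+1-r}{j}$.
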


\begin{proof}  Just as $V_{\Lie}$ in Lemma \ref{Vir-on-V-L}, the uniqueness is clear.
From Lemma \ref{Vir-on-V-L}, there is a $Vir^+$-module structure on $V_{\Lie}$.  
Recall that $V_{B}=V_{\Lie}/I_{B}$, where
$I_{B}$ is the ideal of $V_{\Lie}$ generated by the subset
$$E=\{ e-{\bf 1},\;a(-1)a'-aa',\;a(-1)b-ab \;|\; a,a'\in A,\; b\in B\}.$$
Now, we prove that $I_B$ is a $Vir^+$-submodule, so that $V_{B}$ is naturally a $Vir^+$-module.
Note that $I_B$ is the $\Lie$-submodule generated by $\D^k E$ for $k\ge 0$.
Since $L(-1)=\D$ on $V_{\Lie}$,  it suffices to show that $L(m)E\subset E$ for  $m\ge 0$.

For $m\ge 0$,  we have
\begin{eqnarray*}
L(m)(e-{\bf 1})=L(m)e(-1){\bf 1}-L(m){\bf 1}=-me(m-1){\bf 1}+e(-1)L(m){\bf 1}-L(m){\bf 1}=0
\end{eqnarray*}
and 
\begin{eqnarray*}
L(m)(a(-1)a'-aa')=-ma(m-1)a'+a(-1)L(m)a'-L(m)(aa')=0
\end{eqnarray*}
for $a,a'\in A$ as $a'(n)a=0$ for all $n\ge 0$.

For $a\in A,\ b\in B$, we have 
\begin{eqnarray*}
L(0)(a(-1)b-ab)=a(-1)L(0)b-L(0)(ab)
=a(-1)b-ab.
\end{eqnarray*}
Using (\ref{xyzw}), we also have
\begin{eqnarray*}
L(1)(a(-1)b-ab)=-a(0)b+a(-1)(L(1)b)-L(1)(ab)
=a(-1)(L(1)b)-a(L(1)b),
\end{eqnarray*}
which lies in $E$ with $L(1)b\in A$. Furthermore, for $m\ge 2$, we have
$$L(m)(a(-1)b-ab)=-ma(m-1)b+a(-1)L(m)b-L(m)(ab)=0.$$
In summary, we have $L(m)E\subset E$ for $m\ge 0$.

To prove that $V_{B}$ is a semi-conformal vertex superalgebra, it remains to prove
\begin{eqnarray*}
[L(m), Y(v,x)]=\sum_{i\ge 0}\binom{m+1}{i} x^{m-i+1} Y(L(i-1)v,x)
\end{eqnarray*}
for $m\ge -1,\ v\in V_B$. Let $V$ consist of vectors $v\in V_B$ such that the above equality holds.
We now prove $V=V_{B}$. As $V_{B}$ is a vertex superalgebra generated by $A+ B$,
it suffices to prove that $V$ is a vertex subalgebra which 
contains $A+B$. 
Let $u\in A+B$. 
For $m\ge -1$,  we have
 \begin{eqnarray*}
 &&[L(m), Y(u,x)] \  \  (=[L(m),u(x)])\\
 &=&-\sum_{n\in \bf \Z}(m+n+1)u_{m+n}x^{-n-1}+\sum_{n\in \bf \Z}(m+1)(L(0)u)_{m+n}x^{-n-1}\\
&&\ \ +\binom{m+1}{2}\sum_{n\in \bf \Z}(L(1)u)_{m+n-1}x^{-n-1}\\
&=&x^{m+1}Y(L(-1)u,x)+(m+1)x^{m}Y(L(0)u,x)
+\binom{m+1}{2}x^{m-1}Y(L(1)u,x)\\
&=&\sum_{i\ge 0}\binom{m+1}{i} x^{m-i+1} Y(L(i-1)u,x)
\end{eqnarray*} 
as $L(n)u=0$ for $n\ge 2$.
Thus $A+B\subset V$.
On the other hand, since $Y({\bf 1},x)=1$ and $L(i-1){\bf 1}=0$ for $i\ge0$, we see that ${\bf 1}\in V$.

To prove that $V$ is a vertex subalgebra, we must show that $V$ is closed, which amounts to that
for any $u,v\in V,\ m\in \Z_{\ge -1}$,
\begin{align}\label{L(m)-YYuv}
[L(m), Y(Y(u,x_0)v, x)]=\sum_{i\ge 0}\binom{m+1}{i} x^{m-i+1} Y(L(i-1)Y(u,x_0)v, x).
\end{align}
Now, let $u, v\in V$ and let $m\in \Z_{\ge -1}$.  Using Jacobi identity, we have
\begin{eqnarray*}
& &[L(m), Y(Y(u,x_0)v, x)]\\
&=&\Res_{x_1}[L(m), x_{0}^{-1}\delta\left(\frac{x_{1}-x}{x_{0}}\right)Y(u,x_{1})Y(v,x)
-\varepsilon_{u,v}x_{0}^{-1}\delta\left(\frac{x-x_{1}}{-x_{0}}\right)
Y(v,x)Y(u,x_{1})] \\
&=&\Res_{x_1}x_{0}^{-1}\delta\left(\frac{x_{1}-x}{x_{0}}\right)
\big(\sum_{i\ge 0}\binom{m+1}{i} x_1^{m+1-i} Y(L(i-1)u,x_1)Y(v,x)\\
& &\   \  \   \  +\sum_{i\ge 0}\binom{m+1}{i} x^{m+1-i}Y(u,x_1)Y(L(i-1)v,x)\big)\\
& & -\varepsilon_{u, v}\Res_{x_1}x_{0}^{-1}\delta\left(\frac{x-x_1}{-x_{0}}\right)
\big(\sum_{i\ge 0}\binom{m+1}{i} x^{m+1-i} Y(L(i-1)v,x)Y(u,x_1)\\
& &\   \   \   \   +\sum_{i\ge 0}\binom{m+1}{i} x_1^{m+1-i}Y(v,x)Y(L(i-1)u,x_1)\big)\\
&=&\Res_{x_1}\sum_{i\ge 0}\binom{m+1}{i}x_{1}^{-1}\delta\left(\frac{x+x_0}{x_{1}}\right) x_1^{m+1-i}Y(Y(L(i-1)u,x_0)v,x)\big)\\
& &+\sum_{i\ge 0}\binom{m+1}{i} x^{m+1-i}Y(Y(u,x_0)L(i-1)v,x)\\
&=&\sum_{i\ge 0}\binom{m+1}{i}(x+x_0)^{m+1-i}Y(Y(L(i-1)u,x_0)v,x)\\
& &+\sum_{i\ge 0}\binom{m+1}{i} x^{m+1-i}Y(Y(u,x_0)L(i-1)v,x).
\end{eqnarray*}
On the other hand, we have
\begin{eqnarray*}
&&\sum_{i\ge 0}\binom{m+1}{i}x^{m+1-i}Y([L(i-1),Y(u,x_0)]v,x)\\
&=&\sum_{i,r\ge 0}\binom{m+1}{i}\binom{i}{r}x^{m+1-i}x_{0}^{i-r}Y(Y(L(r-1)u,x_0)v,x)\\
&=&\sum_{r,j\ge 0}\binom{m+1}{r+j}\binom{r+j}{r}x^{m+1-r-j}x_{0}^{j}Y(Y(L(r-1)u,x_0)v,x)\\
&=&\sum_{r,j\ge 0}\binom{m+1}{r}\binom{m+1-r}{j}x^{m+1-r-j}x_{0}^{j}Y(Y(L(r-1)u,x_0)v,x)\\
&=&\sum_{r\ge 0}\binom{m+1}{r}(x+x_0)^{m+1-r}Y(Y(L(r-1)u,x_0)v,x),
\end{eqnarray*}
where we use the fact that $\binom{m+1}{r+j}\binom{r+j}{r}=\binom{m+1}{r}\binom{m+1-r}{j}$.
Combining the last two equations we obtain (\ref{L(m)-YYuv}). 
Therefore, $V$ is a vertex subalgebra that contains $A+B$, and hence
we have $V=V_{B}$, which proves that (\ref{L(m)-Yv-bracket}) holds for all $m\in \Z_{\ge -1},\  v\in V_B$.
As we have proved  in Lemma \ref{Vir-on-V-L} that $L(-1)=\D$ and $L(0)$ is the grading operator,
 thus $V_{B}$ is a semi-conformal vertex superalgebra.
\end{proof}

\begin{de}\label{invarant-bilinear-form}\cite{fhl}
Let $V$ be a vertex operator superalgebra and let $M=M_{\bar{0}}\oplus M_{\bar{1}}$ be a $({\bf \Z}/2{\bf \Z})$-graded $V$-module, a bilinear form $(\cdot,\cdot)$ on $M$  is said to be \emph{invariant} if
\begin{gather*}
(M_{i},M_{j})=\{0\},\quad \mbox{if}\quad i\neq j ;\\
(Y(v,z)w,w')=\varepsilon_{v,w}(w,Y(e^{zL_{1}}(-z^{-2})^{L_{0}}v,z^{-1})w')
\end{gather*}
for $v\in V,\ w,w'\in M$.
\end{de}
Using the definition of semi-conformal vertex superalgebra and the Theorem 3.1 of \cite{Li94}, we have :
\begin{rem}
Let $V$ be a semi-conformal vertex superalgebra with $V_{(0)}=A$ and $V_{(1)}=B$. 
Then there exists a linear isomorphism from the space of invariant bilinear forms on $V$ to $\Hom_{\C}(A/L(1)B, \C)$.
\end{rem}

\section{Example of vertex superalgebroid}
Let $A$ be a commutative associative algebra with identity (over $\C$). 
Let $\g$ be a Lie superalgebra acting on $A$ as a Lie superalgebra of derivations. 
That is, we are given a Lie superalgebra homomorphism $\pi: \g\rightarrow {\rm Der}(A)$. In this section, we will use $ga:=\pi(g)a$, for any $g\in\g,a\in A$.

Since we assume $A$ to be purely even, the odd part of $\g$ necessarily acts trivially on $A$. One can consider also $A$ to be a super-commutative associative superalgebra but then the signs in the formulas become more complicated \cite{DHK}.

Now we introduce the definition of a Lie superalgebroid (Refs.\cite{gms} and \cite{DHK}).
\begin{de}
A Lie superalgebroid is a pair $(\g, A)$, where $A$ is a super-commutative associative superalgebra, $\g$ is a Lie superalgebra  equipped
with an $A$-module structure and a left-module action on $A$ by
derivation such that
\begin{eqnarray}
[u,av]&=&\varepsilon_{u, a}a[u,v]+(ua)v,\\
a(ub)&=&(au)b\;\;\;\mbox{ for }u,v\in \g,\; a,b\in A.
\end{eqnarray}
\end{de}

In \cite{DHK}, they  show that  $A\otimes \g$  has a Lie $A$-superalgebroid structure.
The Lie bracket on $A\otimes \g$ is given by
\begin{eqnarray}\label{liealgebroid}
[a\otimes g, a'\otimes g']=aa'\otimes[g, g']+a(ga')\otimes g'-\varepsilon_{g, g'}a'(g'a)\otimes g
\end{eqnarray}
for $a,a'\in A$, $g,g'\in \g$ (with $g,g'$ homogeneous),  and  the action of $A\otimes \g$ on $A$ is given by
\begin{eqnarray}
(a\otimes g)a'=a(ga'), \    \   \mbox{ for }a,a'\in A,\  g\in \g.
\end{eqnarray}

Set $T=A\otimes \g$ and let  $\Omega=A\partial A$ be the $A$-module of K\"ahlerian 1-differentials with 
 the canonical $A$-derivation $\partial: A\longrightarrow\Omega$, i.e., 
 $\Omega$ is the quotient $A$-module of the free $A$-module
 over $\partial A$ modulo relations for $a, a'\in A$: 
\begin{eqnarray}\label{seven}
\partial(aa')=a\partial a'+a'\partial a. 
\end{eqnarray}
We have the canonical bilinear pairing
$\<\cdot,\cdot\>:  {\rm Der}(A)\times \Omega\rightarrow A$, $\<\tau, a\partial b\>=a\tau(b),$ for $a,b\in A,\tau\in {\rm Der}(A).$

Set $B=T\oplus A\partial A.$
The following result is clear.
\begin{lem} 
The space $B=A\otimes \g+A\partial A$ is an non-associative unital $A$-module with the action defined by 
\begin{eqnarray}
&&a*(b\otimes g)=ab\otimes g+(ga)\partial b+(gb)\partial a, \\
 &&a*(a'\partial b)=(aa')\partial b
 \end{eqnarray}
for $a,a',b\in A,\  g\in \g.$ 
\end{lem}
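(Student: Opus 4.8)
The plan is to verify directly the two defining conditions for a non-associative unital $A$-module in part $(0)$ of Definition \ref{dalgebroid}: that the prescribed rule $A\times B\to B$, $(a,v)\mapsto a*v$, is a well-defined $\C$-bilinear map, and that $1*v=v$ for every $v\in B$.

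First I would check that the two formulas genuinely produce an element of $B=T\oplus A\partial A$ and are $\C$-bilinear. On the summand $A\partial A$ the rule $a*(a'\partial b)=(aa')\partial b$ is nothing but the ordinary $A$-module structure of the $A$-module $\Omega=A\partial A$ of K\"ahlerian $1$-differentials; in particular it is compatible with the defining relations (\ref{seven}), so it is well defined on $\Omega$ and not merely on the free $A$-module over $\partial A$. On the summand $T=A\otimes\g$, which is a free $A$-module because $\g$ is a $\C$-vector space, the rule $a*(b\otimes g)=ab\otimes g+(ga)\partial b+(gb)\partial a$ is $\C$-bilinear in the pair $(a,b\otimes g)$: this is immediate from the $\C$-linearity of $\pi$, the $\C$-bilinearity of the product of $A$, and the $\C$-linearity of $\partial$. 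Its value visibly lies in $A\otimes\g$ plus $A\partial A$, hence in $B$. Extending across the direct sum by linearity gives a well-defined $\C$-bilinear map $A\times B\to B$.

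It then remains to verify the unital axiom. Since $\partial$ satisfies the Leibniz rule (\ref{seven}), taking $a=a'=1$ gives $\partial 1=2\,\partial 1$, so $\partial 1=0$; similarly each $g\in\g$ acts on $A$ as a derivation, whence $g\cdot 1=0$ (and for odd $g$ this holds trivially, as $A$ is purely even). Consequently
\[
1*(b\otimes g)=(1\cdot b)\otimes g+(g\cdot 1)\partial b+(gb)\partial 1=b\otimes g,\qquad
1*(a'\partial b)=(1\cdot a')\partial b=a'\partial b,
\]
and by $\C$-linearity $1*v=v$ for all $v\in B$. This is exactly the structure demanded by part $(0)$ of Definition \ref{dalgebroid}, proving the lemma. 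There is no genuine obstacle here; the single point that warrants a moment's attention is the well-definedness of the $A\partial A$-component of $a*(b\otimes g)$, which is automatic since $(ga)\partial b+(gb)\partial a$ is literally an element of $\Omega$.
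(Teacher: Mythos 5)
Your verification is correct: checking $\C$-bilinearity/well-definedness on each summand of $B=(A\otimes\g)\oplus A\partial A$ and then the unital axiom via $\partial 1=0$ and $g\cdot 1=0$ (both from the Leibniz/derivation property) is exactly the routine check the statement requires, and you rightly do not attempt any associativity, which indeed fails and is not asserted. The paper itself offers no argument (it simply declares the result clear), so your write-up just supplies the omitted direct verification and contains no gaps.
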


Extend $\pi: \g\rightarrow {\rm Der}(A)$ to $\pi: B\rightarrow {\rm Der}(A)$ by
$\pi(a\partial b)=0, \pi(a\otimes g)=a\pi(g).$
From this condition, we have $\pi\circ \partial =0$ and 
\begin{eqnarray}\label{four}
\pi(a*(b\otimes g))=a\pi(b\otimes g).
\end{eqnarray}

\begin{lem}
For any $a,a'\in A,\ v\in B$, we have 
\begin{eqnarray}\label{one}
a*(a'*v)-(aa')*v=\pi(v)(a)* \partial (a')+\pi(v)(a')*\partial (a).
\end{eqnarray}
\end{lem}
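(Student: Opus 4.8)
The plan is to verify the identity (\ref{one}) by reducing to the generators of the $A$-module $B = T \oplus A\partial A$ and checking it case by case on these pieces, since all three maps involved ($a*(a'*v)$, $(aa')*v$, and the right-hand side in terms of $\pi(v)$) are defined explicitly on the spanning sets $\{b\otimes g\}$ and $\{a'\partial b\}$. First I would dispose of the easy case $v = a''\partial b \in A\partial A$: here $\pi(v) = 0$ by the extension convention $\pi(a\partial b) = 0$, so the right-hand side vanishes, and the left-hand side is $a*(a'*(a''\partial b)) - (aa')*(a''\partial b) = (aa'a'')\partial b - (aa'a'')\partial b = 0$ using associativity and commutativity of $A$ together with the defining formula $a*(a'\partial b) = (aa')\partial b$. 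So the content of the lemma is entirely in the case $v = b\otimes g$ with $b \in A$, $g \in \g$.

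For $v = b\otimes g$, I would expand the left-hand side using the formula $a*(b\otimes g) = ab\otimes g + (ga)\partial b + (gb)\partial a$ twice. Computing $a'*(b\otimes g) = a'b\otimes g + (ga')\partial b + (gb)\partial a'$, then applying $a*(-)$: the term $a*(a'b\otimes g)$ gives $aa'b\otimes g + (g(a'b))\partial a + (g(a'b))$-correction plus $(g a)\partial(a'b)$ — more precisely $a*(a'b\otimes g) = a(a'b)\otimes g + (ga)\partial(a'b) + (g(a'b))\partial a$, while $a*((ga')\partial b) = a(ga')\partial b$ and $a*((gb)\partial a') = a(gb)\partial a'$. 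Subtracting $(aa')*(b\otimes g) = aa'b\otimes g + (g(aa'))\partial b + (g(aa'b))\partial(aa')$... I should be careful here: $(aa')*(b\otimes g) = (aa')b\otimes g + (g(aa'))\partial b + (g b)\partial(aa')$. The $\otimes g$ terms cancel by associativity of $A$. What remains must be organized using the Leibniz rule $g(a'b) = (ga')b + a'(gb)$ (and similarly for $g(aa')$) and the Kähler relation (\ref{seven}) $\partial(aa') = a\partial a' + a'\partial a$, $\partial(a'b) = a'\partial b + b\partial a'$. The right-hand side is $\pi(b\otimes g)(a)*\partial a' + \pi(b\otimes g)(a')*\partial a = (b(ga))*\partial a' + (b(ga'))*\partial a = (b(ga)a')\partial a' $-type terms; wait, $c*\partial a' = c\partial a'$ by the module action on $A\partial A$, but $c*\partial a'$ where we view $\partial a' \in A\partial A$ as $1\cdot\partial a'$ gives $c\partial a'$. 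So RHS $= b(ga)\partial a' + b(ga')\partial a$ — except I must double-check whether $\pi(b\otimes g)(a) = (b\otimes g)a = b(ga)$ which matches the action of $T = A\otimes\g$ on $A$ stated just before the lemma. So RHS $= b(ga)\partial a' + b(ga')\partial a$.

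The core of the proof is then the bookkeeping identity in $A\partial A$: after cancellation of the $\otimes g$ terms, I claim
\[
(ga)\partial(a'b) + (g(a'b))\partial a + a(ga')\partial b + a(gb)\partial a' - (g(aa'))\partial b - (gb)\partial(aa') = b(ga)\partial a' + b(ga')\partial a,
\]
which I would verify by expanding every $\partial$ of a product via (\ref{seven}) and every $g$ of a product via the derivation property of $\pi(g)$, then collecting the coefficients of the independent symbols $\partial a$, $\partial a'$, $\partial b$ (treating $a, a', b$ as formal so that $\partial a, \partial a', \partial b$ are the relevant generators of the free module before imposing relations — more honestly, one works modulo (\ref{seven}) and checks both sides reduce to the same element). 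The main obstacle is purely organizational: keeping the six-to-eight product terms and their $A$-coefficients straight and making sure each application of the Leibniz rule and each application of $\partial(xy) = x\partial y + y\partial x$ is used consistently; there is no conceptual difficulty, and no signs to track since $A$ and hence everything in sight is purely even (as emphasized in the paragraph before the Lie superalgebroid definition). I expect the identity to close exactly once all products are fully expanded, with the terms $a(ga')\partial b$ and $-(g(aa'))\partial b = -((ga)a' + a(ga'))\partial b$ combining to leave $-(ga)a'\partial b$, which then pairs with the $\partial b$-part of $(ga)\partial(a'b) = (ga)a'\partial b + (ga)b\partial a'$ to cancel, and so on symmetrically.
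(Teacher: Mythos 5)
Your proposal is correct and follows essentially the same route as the paper's proof: split into the trivial case $v\in A\partial A$ (where $\pi(v)=0$ and the $*$-action is genuinely associative) and the case $v=b\otimes g$, then expand both sides using $a*(b\otimes g)=ab\otimes g+(ga)\partial b+(gb)\partial a$, the derivation property of $g$, and the K\"ahler relation $\partial(aa')=a\partial a'+a'\partial a$; your claimed bookkeeping identity does close exactly as you predict, leaving $b(ga)\partial a'+b(ga')\partial a$, which matches the paper's computation.
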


\begin{proof} Let $a,a'\in A,\  u\in A\partial A$. We have $a*(a'*u)-(aa')*u=0$, $\pi(u)=0$.
For any $a,a'\in A,\ b\otimes g\in T$, 
\begin{eqnarray*}
a*(a'*(b\otimes g))&=&a*(a'b\otimes g+(ga')\partial b+(gb)\partial a')\\
&=&aa'b\otimes g+(ga)\partial (a'b)+g(a'b)\partial a\\
&&+a(ga')\partial b+a(gb)\partial a'\\
&=&aa'b\otimes g+(ga)(a'\partial b+b\partial a')+(ga')b\partial a\\
&&+a'(gb)\partial a+a(ga')\partial b+a(gb)\partial a',\\
\ (a a')*(b\otimes g)&=&aa'b\otimes g+(g(aa'))\partial b+(gb)\partial (aa')\\
&=&aa'b\otimes g+((ga)a'+a(ga'))\partial b\\
&&+(gb)(a\partial a'+a'\partial a),\\
\ a*(a'*(b\otimes g))-(a a')*(b\otimes g)&=&b(ga)\partial a'+b(ga')\partial a\\
&=&\pi(b\otimes g)(a)* \partial (a')+\pi(b\otimes g)(a')*\partial (a).
\end{eqnarray*}
\end{proof}

\begin{lem}
Define a linear operation (bracket) $[\cdot,\cdot]$ on $B= (A\otimes \g )\oplus A\partial A$ by 
$$[A\partial A,A\partial A]=0,$$
 $$[a'\otimes g', a\partial b]=a'(g'a)\partial b+a\partial(a'(g'b)),$$ 
$$[a\partial b, a'\otimes g']=-a'(g'a)\partial b+a'(g'b)\partial a,$$
$$[a\otimes g, a'\otimes g']=aa'\otimes[g, g']+a(ga')\otimes g'-\varepsilon_{g, g'}a'(g'a)\otimes g$$
for $a,a',b,b'\in A,\    g,g'\in \g.$ 
Then  $B$ is a Leibniz superalgebra.
\end{lem}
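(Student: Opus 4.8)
The plan is to verify that the three conditions of a Leibniz superalgebra from the definition hold for the bracket just defined on $B = (A\otimes\g)\oplus A\partial A$. The first condition, that $[B_\alpha, B_\beta]\subseteq B_{\alpha+\beta}$ for the $\Z/2\Z$-grading, is immediate from inspecting the four defining formulas: the grading on $A\otimes\g$ comes from $\g$ (since $A$ is purely even), $A\partial A$ is purely even, and each formula respects degrees (e.g. $[a\otimes g, a'\otimes g']$ lands in components carrying the degree of $[g,g']$ or of $g'$ or of $g$, all of which are $|g|+|g'|$). So the real content is the graded Leibniz identity
\begin{eqnarray*}
[u,[v,w]] = [[u,v],w] + \varepsilon_{u,v}[v,[u,w]]
\end{eqnarray*}
for all homogeneous $u,v,w\in B$.

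Since the identity is multilinear, I would check it case by case according to which of the two summands $A\otimes\g$ or $A\partial A$ each of $u,v,w$ lies in, giving eight cases. Several collapse quickly: if two or more of $u,v,w$ lie in $A\partial A$, then because $[A\partial A, A\partial A]=0$ and because the bracket of anything with an element of $A\partial A$ again lands in $A\partial A$, both sides vanish or reduce trivially. The case $u,v,w\in A\otimes\g$ is exactly the statement that the bracket (\ref{liealgebroid}) makes $A\otimes\g$ a Lie $A$-superalgebroid, which is cited from \cite{DHK} and may be assumed; alternatively one re-derives it from the Lie superalgebroid axioms for $(\g,A)$ together with the derivation property of $\pi$. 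The genuinely new cases are the mixed ones where exactly one or exactly two of the arguments lie in $A\otimes\g$ and the rest in $A\partial A$; here one substitutes the defining formulas for $[a'\otimes g', a\partial b]$ and $[a\partial b, a'\otimes g']$, expands using $\partial(aa') = a\partial a' + a'\partial a$, the module action, and the fact that $g\in\g$ acts on $A$ by derivations, and checks that the two sides agree. The signs $\varepsilon_{g,g'}$ must be tracked carefully, but since $A$ is purely even only the $\g$-degrees contribute, which keeps the bookkeeping manageable.

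The main obstacle I anticipate is the most sign-sensitive mixed case, namely $u = a_1\otimes g_1$, $v = a_2\otimes g_2$ both in $A\otimes\g$ and $w = a_3\partial b_3\in A\partial A$: here $[v,w]$ and $[u,w]$ each already produce a sum of two terms in $A\partial A$, and then bracketing with an element of $A\otimes\g$ again splits each of those, so $[u,[v,w]]$ is a four-term (or more) expression, as is $\varepsilon_{u,v}[v,[u,w]]$, while $[[u,v],w]$ involves the three-term Lie-algebroid bracket $[u,v]$ each summand of which is bracketed against $w$. Matching these requires repeated use of the derivation identity $g_1(g_2 a) - \varepsilon_{g_1,g_2} g_2(g_1 a) = [g_1,g_2]a$ and of $\partial(aa')=a\partial a'+a'\partial a$, and it is essentially a transcription into the $B$-language of the Jacobi identity already established for $A\otimes\g$. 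I would organize the computation so that the $A\otimes\g$-valued Jacobi identity is invoked as a black box wherever possible, and only the genuinely $\Omega$-valued discrepancies are computed by hand, reducing the risk of sign errors. Once all eight cases are dispatched, the graded Leibniz identity holds, and together with the grading condition this shows $B$ is a Leibniz superalgebra.
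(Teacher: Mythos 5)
Your proposal matches the paper's proof in both structure and emphasis: the paper likewise disposes of the all-$A\otimes\g$ case by citing the Lie $A$-superalgebroid structure, treats the cases with two or more arguments in $A\partial A$ as trivial, and explicitly verifies exactly the mixed case you single out as the hardest one, namely $[c\otimes g,[a'\otimes g',a\partial b]]$ with the first two arguments in $A\otimes\g$ and the third in $A\partial A$, expanding via the derivation property and $\partial(aa')=a\partial a'+a'\partial a$ (the remaining mixed cases are left as ``similar''). So the approach is correct and essentially identical; carrying out the displayed six-term expansion would complete it at the paper's own level of detail.
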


\begin{proof}  
Recall that $A\otimes \g$  is a Lie $A$-superalgebroid. Thus the case that all three elements belong to $A\otimes \g$ satisfies the graded  Leibniz  identity.

For any $a,a',b,c\in A,\  g,g'\in \g$,
\begin{eqnarray*}
[c\otimes g,[a'\otimes g',a\partial b]]&=&c(ga')(g'a)\partial b+ca'(g(g'a))\partial b+a'(g'a)\partial(c(gb))\\
&&+c(ga)\partial(a'(g'b))+a\partial(c(ga')(g'b))+a\partial(ca'g(g'b))\\
&=&[[c\otimes g,a'\otimes g'],a\partial b]+\varepsilon_{g, g'}[a'\otimes g',[c\otimes g,a\partial b]].
\end{eqnarray*}
For other cases, one can verify similarly. 
 \end{proof}

By definition, we have
\begin{eqnarray}\label{eight}
[a\otimes g,\partial (b)]=\partial (\pi(a\otimes g)(b))
\end{eqnarray}
and 
\begin{eqnarray}
~[a\otimes g, b*(a'\partial b')]&=&\pi(a\otimes g)(b)*(a'\partial b')+b*[a\otimes g, a'\partial b'],\label{two1}\\
~[a'\partial b',b*(a\otimes g)]&=&b*[a'\partial b',a\otimes g].\label{two2}
\end{eqnarray}
 
\begin{lem}
 Extend $\<\cdot,\cdot\>: Der(A)\times \Omega\rightarrow A$ to a bilinear pairing
$\<\cdot,\cdot\>: B\otimes_{\C}B\rightarrow A$, where 
$$\<a\partial b, a'\partial b'\>=0, \,\, \<a\otimes g,a'\otimes g'\>=0,$$
\begin{eqnarray}\label{last}
\<a\otimes g, a'\partial b\>=a\<g,a'\partial b\>=aa'\pi(g)b=\pi(aa'\otimes g)b.
\end{eqnarray}
Then for any $a\in A,\   u,v\in B$, we have 
\begin{eqnarray}\label{five}
\<a*u,v\>=a\<u,v\>-\pi(u)(\pi(v)(a)).
\end{eqnarray}
\end{lem}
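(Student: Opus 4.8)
The plan is to verify the identity $\<a*u,v\> = a\<u,v\> - \pi(u)(\pi(v)(a))$ by reducing to cases according to which summands of $B = (A\otimes\g)\oplus A\partial A$ the elements $u$ and $v$ lie in. Since the pairing $\<\cdot,\cdot\>$ and the $A$-action $*$ are both $\C$-bilinear, and since $B$ is the direct sum of the two pieces, it suffices to check the identity when $u$ and $v$ are each homogeneous of one of the two types. This gives four cases: $(u,v)\in (A\partial A)\times(A\partial A)$, $(A\partial A)\times(A\otimes\g)$, $(A\otimes\g)\times(A\partial A)$, and $(A\otimes\g)\times(A\otimes\g)$.

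First I would dispose of the degenerate cases. When $u = a'\partial b' \in A\partial A$, we have $\pi(u)=0$, so the right-hand side is just $a\<a'\partial b',v\>$; on the left, $a*(a'\partial b') = (aa')\partial b'$, so we need $\<(aa')\partial b', v\> = a\<a'\partial b', v\>$, which follows from the $A$-bilinearity of the pairing $\<\cdot,\cdot\>: \mathrm{Der}(A)\times\Omega\to A$ in its second (i.e. $\Omega$) argument — in the subcase $v\in A\partial A$ both sides are $0$, and in the subcase $v = a''\otimes g$ this is exactly formula (\ref{last}) read symmetrically, $\<a''\otimes g, (aa')\partial b'\> = a''aa'\pi(g)b' = a\<a''\otimes g, a'\partial b'\>$, together with the symmetry axiom $\<u,v\>=\varepsilon_{u,v}\<v,u\>$ (here both elements have mixed parity contributions, but since $A$ is purely even the sign is trivial). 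The case $u\in A\otimes\g$, $v\in A\otimes\g$ is also easy: then $\<u,v\>=0$ and $\pi(v)$ lands in $\mathrm{Der}(A)$ applied to $A$, but more to the point $\<a*u,v\>$ — since $a*(b\otimes g) = ab\otimes g + (ga)\partial b + (gb)\partial a$ has an $A\otimes\g$ part and an $A\partial A$ part — pairs the $A\otimes\g$ part trivially with $v\in A\otimes\g$ and pairs the $A\partial A$ part nontrivially; so I must compute $\<(ga)\partial b + (gb)\partial a, a'\otimes g'\>$ and check it equals $-\pi(b\otimes g)(\pi(a'\otimes g')(a)) = -b(g(a'(g'a)))$... wait, I should instead use that $\pi(v)(a)$ is where the real content is.

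The substantive case is $u = b\otimes g \in A\otimes\g$ and $v = a'\partial b' \in A\partial A$. Here I would expand $a*(b\otimes g) = ab\otimes g + (ga)\partial b + (gb)\partial a$ and pair each term against $a'\partial b'$ using (\ref{last}) and $\<A\partial A, A\partial A\>=0$: only the first term survives, giving $\<ab\otimes g, a'\partial b'\> = aba'\pi(g)b'$. On the other side, $a\<b\otimes g, a'\partial b'\> = aba'\pi(g)b'$ as well, and $\pi(u)(\pi(v)(a)) = \pi(b\otimes g)(\pi(a'\partial b')(a)) = \pi(b\otimes g)(0) = 0$ since $\pi$ kills $A\partial A$. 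So the identity reduces to $aba'\pi(g)b' = aba'\pi(g)b' - 0$, which holds. The remaining case $u = a'\partial b'$, $v = b\otimes g$ is handled by the first paragraph's argument combined with the symmetry of the pairing, or directly as in the degenerate cases. I expect no genuine obstacle here — the main point requiring care is bookkeeping the $A\partial A$-valued pieces of $a*(b\otimes g)$ and confirming they pair to zero against whichever type of $v$ appears, together with the repeated use of formula (\ref{last}) which encodes $\<a\otimes g, a'\partial b\> = \pi(aa'\otimes g)b$; since $A$ is purely even, all the $\varepsilon$ signs that would appear in the general super-algebroid axiom (\ref{e2.26}) collapse to $1$, which is why the statement is written without them.
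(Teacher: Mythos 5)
Your easy cases are fine and coincide with the paper's: for $u\in A\partial A$ both sides reduce to $A$-linearity of the pairing in the $\Omega$-slot, and for $u=b\otimes g$, $v=a'\partial b'$ the $A\partial A$-components of $a*(b\otimes g)$ pair to zero and everything matches. The genuine gap is the case $u=b\otimes g$, $v=c\otimes g'$ both in $A\otimes\g$, which you begin, abandon mid-sentence ("wait, I should instead use\dots"), and never complete; worse, your closing summary asserts that the $A\partial A$-valued pieces of $a*(b\otimes g)$ "pair to zero against whichever type of $v$ appears", which is false exactly here, since $\langle (ga)\partial b, c\otimes g'\rangle=c(ga)(g'b)$ and $\langle (gb)\partial a, c\otimes g'\rangle=c(gb)(g'a)$ are generally nonzero. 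The check you deferred is the entire content of this case, and it does not come out as you expect: with the pairing literally as printed in the statement ($\langle A\otimes\g,\,A\otimes\g\rangle=0$) the identity would require $c(ga)(g'b)+c(gb)(g'a)=-\pi(b\otimes g)\big(\pi(c\otimes g')(a)\big)=-b(gc)(g'a)-bc\,g(g'a)$, which already fails for $A=\C[x]$, $\g=\C\,d/dx$, $g=g'=d/dx$, $a=b=c=x$ (left side $2x$, right side $-x$).

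The paper's own proof of this case in fact does not use the zero pairing: it computes with $\langle b\otimes g, c\otimes g'\rangle=-b\,g'(gc)-c\,g(g'b)-(gc)(g'b)$, a nonzero symmetric expression, and with that choice both sides of (\ref{five}) agree term by term; the line "$\langle a\otimes g,a'\otimes g'\rangle=0$" in the statement is evidently a misprint relative to the proof. So your proposal is incomplete precisely at the one substantive case: to close it you must either carry out the mixed-component computation with the corrected pairing on $(A\otimes\g)\times(A\otimes\g)$ (as the paper does), or else, reading the pairing as you did from the statement, the claimed identity is simply false in that case, so no bookkeeping argument of the kind you sketch can succeed.
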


\begin{proof} 
Indeed, for any $a\in A, u, v\in A\partial A$, $\<a*u,v\>=0, a\<u,v\>=0$, $\pi(u)=0, \pi(v)=0,$ it is easy to see  (\ref{five}).

For any $a,a_1,a',b\in A,g\in \g$, we have $\<a*(a_1\otimes g),a'\partial b\>=\<aa_1\otimes g,a'\partial b\>=aa_1a'(gb)$, $\<a_1\otimes g,a'\partial b\>=a_1a'(gb)$, and $\pi(a_1\otimes g)=0.$

For any $a,b,c\in A,g,g'\in \g$, 
\begin{eqnarray*}
\pi(b\otimes g)(\pi(c\otimes g')(a))&=&b(gc)(g'a)+bcg(g'a)\\
a\<b\otimes g,c\otimes g'\>&=&-abg'(gc)-acg(g'b)-a(gc)(g'b)\\
\<a*(b\otimes g),c\otimes g'\>&=&-abg'(gc)-cg((g'a)b+a(g'b))-(gc)(g'a)b\\
&&-a(gc)(g'b)+c(ga)(g'b)+c(gb)(g'a)\\
&=&-abg'(gc)-bcg(g'a)-acg(g'b)-b(gc)(g'a)-a(gc)(g'b)
\end{eqnarray*}
We obtain that $\<a*(b\otimes g),c\otimes g'\>=a\<b\otimes g,c\otimes g'\>-\pi(b\otimes g)(\pi(c\otimes g')(a)).$
\end{proof}

\begin{lem} 
We have 
\begin{eqnarray}\label{six}
\pi(v)(\<v_{1},v_{2}\>)&=&\<[v,v_{1}],v_{2}\>+\varepsilon_{v, v_{1}}\<v_{1},[v,v_{2}]\>
\end{eqnarray}
for any $v,v_1,v_2\in B$, and 
\begin{eqnarray}\label{three}
[u,v]+\varepsilon_{u,v}[v,u]=\partial(\<u,v\>)
\end{eqnarray}
for any $u,v\in B$.
\end{lem}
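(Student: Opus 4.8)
The plan is to prove both identities by case analysis according to which of the two summands $T=A\otimes\g$ and $\Omega=A\partial A$ each argument lies in. Since $B=T\oplus\Omega$ and the bracket, the pairing $\<\cdot,\cdot\>$ and $\pi$ are all $\C$-(bi)linear, it suffices to treat the cases in which $u,v$ (respectively $v,v_1,v_2$) are homogeneous and lie in a single summand. In every such case one uses only the defining formulas for $[\cdot,\cdot]$, $\<\cdot,\cdot\>$ and $\pi$, the commutativity of $A$, the Leibniz rule for derivations in $\Der(A)$ together with the K\"ahler relation $\partial(aa')=a\partial a'+a'\partial a$, and the fact that $\pi\colon\g\to\Der(A)$ is a homomorphism of Lie superalgebras; the latter gives $\pi([h,g])=\pi(h)\pi(g)-\varepsilon_{h,g}\pi(g)\pi(h)$ and, because $\g_{\bar 1}$ acts on $A$ by zero, forces every monomial in which an odd element of $\g$ acts on an element of $A$ to vanish. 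This last point is what makes the super signs come out right.

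For (\ref{six}): one first checks the identity is symmetric under $v_1\leftrightarrow v_2$ — take the instance with $(v;v_2,v_1)$, multiply it by $\varepsilon_{v_1,v_2}$, and rearrange using the symmetry of $\<\cdot,\cdot\>$ — so one may assume either that $v_1$ and $v_2$ lie in the same summand, or that $v_1\in T$ and $v_2\in\Omega$. Whenever $v_1,v_2$ lie in the same summand, $\<v_1,v_2\>=0$ kills the left side; on the right, $[v,v_i]$ again lies in one of the two summands, so each pairing is either automatically zero or, in the single case $v_1,v_2\in T$ with $v\in\Omega$, reduces to two monomials that cancel by commutativity of $A$. When $v_1\in T$, $v_2\in\Omega$ and $v\in\Omega$, the left side is zero because $\pi(v)=0$, and the right side is zero because $[v,v_1]\in\Omega$ pairs trivially with $v_2\in\Omega$ and $[v,v_2]=0$. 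The only substantive case is $v=c\otimes h\in T$, $v_1=a_1\otimes g_1\in T$, $v_2=a_2\partial b_2\in\Omega$: then $\<v_1,v_2\>=a_1a_2(g_1b_2)$, so the left side equals $c\,h\big(a_1a_2(g_1b_2)\big)$, which the Leibniz rule expands into a sum of three monomials; on the right one expands $[c\otimes h,a_1\otimes g_1]=ca_1\otimes[h,g_1]+c(ha_1)\otimes g_1-\varepsilon_{h,g_1}a_1(g_1c)\otimes h$ and $[c\otimes h,a_2\partial b_2]=c(ha_2)\partial b_2+a_2\partial(c(hb_2))$, applies the pairing, and rewrites $[h,g_1]$ acting on $b_2$ as $h(g_1b_2)-\varepsilon_{h,g_1}g_1(hb_2)$; two pairs of terms then cancel, and the remaining three match the left side, the only place where a sign could a priori go wrong being the term $\varepsilon_{h,g_1}\,c\,a_1(ha_2)(g_1b_2)$ versus $c\,a_1\,h(a_2)(g_1b_2)$, which are equal because both vanish when $h$ is odd while $\varepsilon_{h,g_1}=1$ when $h$ is even.

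For (\ref{three}): if $u,v\in\Omega$ both sides vanish. If $u=a\otimes g$ and $v=a'\otimes g'$ lie in $T$, then $\<u,v\>=0$, so the right side is $\partial 0=0$, and expanding $[a\otimes g,a'\otimes g']+\varepsilon_{g,g'}[a'\otimes g',a\otimes g]$ by the bracket formula, using $[g',g]=-\varepsilon_{g,g'}[g,g']$ and $\varepsilon_{g,g'}\varepsilon_{g',g}=1$, one finds the three pairs of terms cancel. In the remaining case $u=a'\otimes g'\in T$, $v=a\partial b\in\Omega$ (the case $u\in\Omega$, $v\in T$ reduces to this one, since multiplying the identity by $\varepsilon_{u,v}$ and using $\<u,v\>=\varepsilon_{u,v}\<v,u\>$ shows it is unchanged by $u\leftrightarrow v$) one has $\varepsilon_{u,v}=1$, and adding $[a'\otimes g',a\partial b]=a'(g'a)\partial b+a\partial(a'(g'b))$ to $[a\partial b,a'\otimes g']=-a'(g'a)\partial b+a'(g'b)\partial a$ leaves $a\partial(a'(g'b))+(a'(g'b))\partial a$, which is $\partial\big(a\,a'(g'b)\big)=\partial\<a'\otimes g',a\partial b\>$ by the K\"ahler relation and commutativity.

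I expect the only genuine obstacle to be the sign- and term-bookkeeping in the last subcase of (\ref{six}): although the mechanism of every cancellation is already visible from the homomorphism property of $\pi$, the Leibniz rule, and the vanishing of the $\g_{\bar 1}$-action on $A$, one must lay out about a dozen monomials and pair them off with care.
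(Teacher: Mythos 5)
Your proof is correct and follows essentially the same route as the paper's: a case-by-case analysis over the decomposition $B=(A\otimes\g)\oplus A\partial A$ using the defining formulas for the bracket, the pairing and $\pi$, with the super-sign subtleties disposed of by the triviality of the odd $\g$-action on $A$. You in fact verify in detail the substantive case $v,v_1\in A\otimes\g$, $v_2\in A\partial A$ of (\ref{six}) and the mixed case of (\ref{three}), which the paper leaves to the reader after writing out only the case $v\in A\partial A$, $v_1,v_2\in A\otimes\g$.
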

 
\begin{proof}
In fact, if all the three elements $v,v_1,v_2\in A\partial A$, it is easy to see  (\ref{six}). If $v\in A\partial A$, one of the elements $v_1,v_2$ $\in A\partial A$, it is also easy to see  (\ref{six}).
If $v=b\partial c\in A\partial A$ and $v_1=a\otimes g\in T$, $v_2=a'\otimes g'\in T$, we have
$\pi(v)=0$,
\begin{eqnarray*}
\<[v,v_{1}],v_{2}\>+\varepsilon_{v, v_{1}}\<v_{1},[v,v_{2}]\>&=&\<[b\partial c,a\otimes g],a'\otimes g'\>+\<a\otimes g,[b\partial c,a'\otimes g']\>\\
&=&\<-a(gb)\partial c+a(gc)\partial b,a'\otimes g'\>\\
&&+\<a\otimes g,-a'(g'b)\partial c+a'(g'c)\partial b\>\\
&=&-aa'(gb)(g'c)+aa'(gc)(g'b)\\
&&+aa'(g'c)(gb)-aa'(g'b)(gc)\\
&=&0
\end{eqnarray*}
One can check the other cases by a similarly argument.

For the second part.
Indeed, for any $u,v\in A\partial A$ and $u\in A\partial A, v\in A\otimes\g$, the result is obviously.
For any $a,a'\in A,g,g'\in\g$, we have $[a\otimes g,a'\otimes g']+\varepsilon_{g, g'}[a'\otimes g',a\otimes g)]=0$, and $\partial(\<a\otimes g,a'\otimes g'\>)=0.$
\end{proof}
Recall $T$ is a Lie A-superalgebroid, we have 
\begin{eqnarray}\label{two3}
[a\otimes g,b*(a'\otimes g')]=\pi(a\otimes g)(b)*(a'\otimes g')+b*[a\otimes g,a'\otimes g'].
\end{eqnarray}
We can verify (\ref{pe2.22}) by $(\ref{one})$, $(\ref{pe2.23})$  by $(\ref{two1}), (\ref{two2}) ~and ~(\ref{two3})$, $(\ref{e2.23})$-$(\ref{e2.29})$ by $(\ref{three})$, $(\ref{four})$, $(\ref{five})$, $(\ref{six})$, $(\ref{seven})$, $(\ref{eight})$, $(\ref{last})$ respectively. Thus
we get $B$ is a vertex $A$-superalgebroid.

Recall Proposition \ref{pact}, for any $a, a'\in A,\ g\in \g$, we have
\begin{eqnarray*}
a_0A=0,\   a_0 (A\partial A)=0=(A\partial A)_0a,\   a_0(a'\otimes g)=-(a'\otimes g)_0a=-a'(ga).
\end{eqnarray*}
\begin{de}
A derivation from a Lie superalgebra $\g$ into a $\g$-module $A$ is an even linear map $d: \g\rightarrow A$ such that 
\begin{eqnarray*}
d([x,y])=x\circ d(y)-y\circ d(x), \quad x,y\in\g.
\end{eqnarray*}
\end{de}

\begin{lem}\label{EX}
Let $B=(A\otimes \g)\oplus A\partial A$ be the vertex $A$-superalgebroid associated to $(A,\g)$ and let $d: \g\rightarrow A$ be a derivation from  $\g$ into the $\g$-module $A$. Then $A\oplus B$ becomes a $\mathfrak{b}$-module with 
\begin{eqnarray*}
&&L(0)|_A=0, \   \  \  L(0)|_B=1, \    \    \   L(1)|_{A}=0,\   \   \  L(1)|_{A\partial A}=0,\\
&&L(1)(a\otimes g)=ga+ad(g)\   \   \   \mbox{ for }a\in A,\    g\in \g.
\end{eqnarray*}
 Furthermore this $\mathfrak{b}$-module satisfies (\ref{de}) and (\ref{xyzw}). \end{lem}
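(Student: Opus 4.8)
The plan is to verify, in order, that the proposed operators $L(0)$ and $L(1)$ define a genuine $\mathfrak{b}$-module structure on $A\oplus B$, and then to check the two compatibility conditions (\ref{de}) and (\ref{xyzw}). First I would confirm the $\mathfrak{b}$-module axioms: since $\mathfrak{b}$ is spanned by $L_0,L_1$ with the single relation $[L_0,L_1]=-L_1$, it suffices to check $[L(0),L(1)]=-L(1)$ on $A\oplus B$. On $A$ and on $A\partial A$ both sides vanish, so the only content is on $A\otimes\g$: there $L(1)(a\otimes g)=ga+ad(g)\in A$, hence $L(0)L(1)(a\otimes g)=0$, while $L(1)L(0)(a\otimes g)=L(1)(a\otimes g)$ since $L(0)|_B=1$; thus $[L(0),L(1)](a\otimes g)=-L(1)(a\otimes g)$, as required. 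One also notes $L(1)\colon B\to A$ lowers degree by $1$ as it should, and $L(0)$ is the grading operator by construction. Along the way I should record that $L(1)\partial A=0$ holds trivially since $\partial A\subseteq A\partial A$ and $L(1)|_{A\partial A}=0$; this is one of the standing hypotheses needed to invoke the earlier lemmas.

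Next I would verify (\ref{de}), namely $L(1)(u_0v)=(L(1)u)_0v+u_0(L(1)v)$ for $u,v\in B$, where $u_0v=[u,v]$ is the Leibniz bracket. Because $L(1)$ annihilates $A$ and $A\partial A$, the left side vanishes unless both $u,v$ lie in $A\otimes\g$; and on the right side each term involves $L(1)$ applied to an element of $B$, which is nonzero only on the $A\otimes\g$ component. So the only nontrivial case is $u=a\otimes g$, $v=a'\otimes g'$. Here $[u,v]=aa'\otimes[g,g']+a(ga')\otimes g'-\varepsilon_{g,g'}a'(g'a)\otimes g$, and applying $L(1)$ gives $aa'([g,g']\cdot 1 + d([g,g'])) + \cdots$ — wait, more carefully, $L(1)(aa'\otimes[g,g'])=[g,g'](aa')+aa'\,d([g,g'])$. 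Using that $d$ is a derivation, $d([g,g'])=g\circ d(g')-g'\circ d(g)=g\,d(g')-g'\,d(g)$ (the $\circ$ being the $\g$-action on $A$, which for $A$ purely even is just $\pi$). Expanding $(L(1)u)_0v+u_0(L(1)v)$: since $L(1)u=ga+a\,d(g)\in A$ and $A_0(A\otimes\g)=-$ (from the Proposition \ref{pact} display, $a_0(a'\otimes g)=-a'(ga)$), this is a matter of matching the resulting elements of $A\otimes\g$ against those from expanding the left side, using the product rule for the $\g$-action on products in $A$ and the derivation property of $d$. This is a direct but slightly lengthy computation in $A$.

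For (\ref{xyzw}), $L(1)(ab)=aL(1)b-a_0b$ for $a\in A$, $b\in B$: again split $b$ into its $A\otimes\g$ and $A\partial A$ parts. If $b=a'\partial b'\in A\partial A$, then $a*b=(aa')\partial b'\in A\partial A$, so $L(1)(ab)=0$; and $aL(1)b=0$, while $a_0b=a_0(a'\partial b')=0$ by the displayed relations after Proposition \ref{pact} ($a_0(A\partial A)=0$), so both sides vanish. If $b=a'\otimes g$, then $a*b=aa'\otimes g+(ga)\partial a'+(ga')\partial a$; applying $L(1)$ kills the two $\partial$-terms and leaves $L(1)(aa'\otimes g)=g(aa')+aa'\,d(g)$. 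On the right, $aL(1)(a'\otimes g)=a(ga'+a'd(g))=a(ga')+aa'\,d(g)$ and $a_0b=a_0(a'\otimes g)=-a'(ga)$, so $aL(1)b-a_0b=a(ga')+a'(ga)+aa'\,d(g)=g(aa')+aa'\,d(g)$ by the Leibniz rule $g(aa')=(ga)a'+a(ga')$ for the derivation action. Hence both sides agree.

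The bulk of the argument is thus routine bookkeeping; the one place demanding genuine care is the $u=a\otimes g,\ v=a'\otimes g'$ case of (\ref{de}), where the cancellations hinge precisely on $d$ being a \emph{derivation} (the identity $d([g,g'])=g\,d(g')-g'\,d(g)$) together with the Jacobi identity in $\g$ and the Leibniz rule for the action on products. I expect this to be the main obstacle — not conceptually deep, but the sign conventions $\varepsilon_{g,g'}$ and the several $\partial$-terms (which must all cancel since $L(1)$ kills them on the left but are produced on the right) need to be tracked honestly. Everything else reduces, via the observation that $L(1)$ is supported on $A\otimes\g$ and lands in $A$, to short verifications like those above.
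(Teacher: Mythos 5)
Your overall strategy is exactly the paper's: check the $\mathfrak{b}$-relation $[L(0),L(1)]=-L(1)$ (trivial since $L(1)$ maps $B$ into $A$ and kills $A$), verify (\ref{xyzw}) by splitting $b$ into its $A\partial A$ and $A\otimes\g$ parts, and verify (\ref{de}) by a case analysis whose only nontrivial case is $u=a\otimes g$, $v=a'\otimes g'$, closed by the derivation identity $d([g,g'])=g\,d(g')-g'\,d(g)$. Your treatment of the module structure and of (\ref{xyzw}) is complete and agrees with the paper (including the use of $a_0(a'\otimes g)=-a'(ga)$, $a_0(A\partial A)=0$, and the Leibniz rule $g(aa')=(ga)a'+a(ga')$).

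Two points, one minor and one substantive. Minor: your reduction of (\ref{de}) to the case $u,v\in A\otimes\g$ is justified only by ``$L(1)$ is supported on the $A\otimes\g$ component,'' but that does not dispose of the mixed cases by itself. For $u\in A\partial A$, $v=a'\otimes g'$ the term $u_0(L(1)v)$ has $L(1)v\neq 0$ in $A$, and it vanishes only because $u_0c=\pi(u)(c)=0$ for $u\in A\partial A$ (and symmetrically $(L(1)u)_0v=0$ in the case $u\in A\otimes\g$, $v\in A\partial A$ because $A_0(A\partial A)=0$); these facts from Proposition \ref{pact} must be invoked, as the paper does. Substantive: the heart of the lemma --- and essentially the whole of the paper's proof --- is the explicit verification of (\ref{de}) for $u=a\otimes g$, $v=a'\otimes g'$, and you do not carry it out; you sketch the first term and assert the rest is ``routine bookkeeping.'' The identity does hold, but seeing that requires expanding all three pieces: $(L(1)u)_0v=-a'g'(ga)-a'(g'a)d(g)-aa'g'(d(g))$, $u_0(L(1)v)=ag(g'a')+a(ga')d(g')+aa'g(d(g'))$, and $L(1)(u_0v)$ computed from the bracket $aa'\otimes[g,g']+a(ga')\otimes g'-\varepsilon_{g,g'}a'(g'a)\otimes g$, after which the $A$-terms match using $g(g'a')-\varepsilon_{g,g'}g'(ga')$ type expansions of $[g,g']$ on products and the $d$-terms match precisely via $d([g,g'])=g\,d(g')-g'\,d(g)$. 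As written, your proposal defers exactly the step that carries the content, so it is a correct plan rather than a complete proof; filling in that computation (with the signs, noting that odd elements of $\g$ act trivially on the purely even $A$) brings it in line with the paper.
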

\begin{proof}
It is straightforward to show that $A\oplus B$ is a $\mathfrak{b}$-module.
For any $a, a'\in A, \   g\in \g$, we have
\begin{eqnarray*}
 L(1)(a(a'\otimes g))&=&L(1)(aa'\otimes g+ag(a')+a'(ga))\\
&=&g(aa')+aa'd(g)\\
&=&(ga)a'+a(ga')+aa'd(g),\\
\ aL(1)(a'\otimes g)-a_0(a'\otimes g)&=&a(ga')+aa'd(g)-a_0(a'\otimes g)\\
&=&a(ga')+aa'd(g)+(ga)a'.
\end{eqnarray*}
This proves that (\ref{xyzw}) holds.

Next we need to show that: for any $u,v\in B$, $L(1)(u_0v)=(L(1)u)_0v+u_0(L(1)v)$.
Recall Proposition \ref{pact}, we have $u_{0}v=[u,v]$, for any $u,v\in B$.

For $u,v\in A\partial A$, we have $L(1)u=0$, $L(1)v=0$ and $L(1)(u_0v)=0$.
For $u\in A\partial A$, $v\in A\otimes \g$, $L(1)u=0$, $u_0(L(1)v)=0$, $L(1)(u_0v)=0$. For $u\in A\otimes \g$, $v\in A\partial A$, $L(1)v=0$, $(L(1)u)_0v=0$, $L(1)(u_0v)=0$.

For $u=a\otimes g$, $v=a'\otimes g'$, we have
\begin{eqnarray*}
(L(1)u)_0v&=&(ga+ad(g))_0v=-a'g'(ga)-a'(g'a)d(g)-a'ag'(d(g)),\\
u_0(L(1)v)&=&u_0(g'a'+a'd(g'))=ag(g'a)+a(ga')d(g')+a'ag(d(g')),\\
u_0v&=&[a\otimes g, a'\otimes g']=aa'\otimes[g, g']+a(ga')\otimes g'-\varepsilon_{g, g'}a'(g'a)\otimes g.
\end{eqnarray*}
Then 
$L(1)(u_0v)=ag(g'a)-a'g'(ga)+aa'd([g,g'])-\varepsilon_{g, g'}a'(g'a)d(g)+a(ga')d(g')$.
Note that $d$ is a derivation, i.e. $d([g,g'])=gd(g')-g'd(g)$, thus we have $L(1)(u_0v)=(L(1)u)_0v+u_0(L(1)v).$

This proves that (\ref{de}) holds.
\end{proof}
In summary, we have proved:
 
  \begin{prop}
Let $A$ be a commutative associative algebra and let $\g$ be a Lie superalgebra which acts on $A$ as a Lie algebra of derivations. 
For any derivation $d: \g \rightarrow A$,  on
the associated vertex (super)algebra $V_{B}$ with $B=(A\otimes \g) \oplus A\partial A $
there exists a semi-conformal structure, which is uniquely determined by
\begin{eqnarray}
&&L(0)|_A=0, \   \  \  L(0)|_B=1, \    \    \   L(1)|_{A}=0,\   \   \  L(1)|_{A\partial A}=0,\\
&&L(1)(a\otimes g)=ga+ad(g)\   \   \   \mbox{ for }a\in A,\    g\in \g.
\end{eqnarray}
Furthermore, we have $L(1)(V_B)_{(1)}=L(1)B$. 
\end{prop}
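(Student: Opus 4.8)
The plan is to derive this Proposition as an immediate consequence of Lemma~\ref{EX} together with the main theorem of Section~3. First I would check that the triple $(A,\g,d)$ supplies exactly the hypotheses needed there. Lemma~\ref{EX} already tells us that the prescribed $L(0)$ and $L(1)$ make $A\oplus B$ into a $\mathfrak{b}$-module; since $L(0)$ acts as $0$ on $A$ and as $1$ on $B$, it is diagonalizable, so this is a weight $\mathfrak{b}$-module with $L(0)|_A=0$, $L(0)|_B=1$. Because $\partial a\in A\partial A$ for each $a\in A$ and $L(1)|_{A\partial A}=0$, we get $L(1)\partial A=0$. Lemma~\ref{EX} also records that (\ref{de}) and (\ref{xyzw}) hold, the derivation identity $d([g,g'])=gd(g')-g'd(g)$ being precisely what forces (\ref{de}). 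So all the hypotheses of the main theorem are met.

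The next step is to invoke that theorem directly: the $\mathfrak{b}$-module structure on $A+B$ extends to a $Vir^+$-module structure on $V_B$, uniquely determined by $L(-1)=\D$ and the bracket relation (\ref{L(m)-Yv-bracket}), and $V_B$ with this structure is a semi-conformal vertex superalgebra. For the uniqueness statement of the Proposition I would point to the last part of Lemma~\ref{Vir-on-V-L}: the embedding $u\mapsto u(-1){\bf 1}$ is a $\mathfrak{b}$-module homomorphism, so the $Vir^+$-action is completely pinned down by its restriction to $A+B$, which is the prescribed pair $L(0),L(1)$ together with $L(-1)=\D$; hence the extension is unique. This simultaneously establishes existence, uniqueness, and semi-conformality.

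Finally, for the identity $L(1)(V_B)_{(1)}=L(1)B$: by Theorem~\ref{tA-B} the map $v\mapsto v(-1){\bf 1}$ identifies $(V_B)_{(1)}$ with $B$, and the computation in the proof of Lemma~\ref{Vir-on-V-L} gives $L(1)\bigl(b(-1){\bf 1}\bigr)=(L(1)b)(-1){\bf 1}$ for $b\in B$; thus $L(1)$ on the weight-one subspace of $V_B$ is literally $L(1)$ on $B$, and $L(1)(V_B)_{(1)}=L(1)B$, namely the span of the elements $ga+a\,d(g)$ with $a\in A$, $g\in\g$. I expect no serious obstacle here: the entire computational content sits in Lemma~\ref{EX}, which has already been carried out, and what remains is bookkeeping --- verifying that $L(1)\partial A=0$ follows from $\partial A\subseteq A\partial A$, that the $L(0)$-semisimplicity needed for a weight $\mathfrak{b}$-module is automatic, and that the $L(0)$-grading produced in Lemma~\ref{Vir-on-V-L} coincides with the original $\N$-grading of $V_B$, so that $(V_B)_{(0)}=A$ and $(V_B)_{(1)}=B$ in the semi-conformal sense.
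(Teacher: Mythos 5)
Your proposal is correct and matches the paper's own (implicit) argument: the paper states the Proposition with "In summary, we have proved," i.e.\ it is obtained exactly as you do, by feeding the $\mathfrak{b}$-module structure of Lemma~\ref{EX} (with $L(1)\partial A=0$ since $\partial A\subseteq A\partial A$, and conditions (\ref{de}), (\ref{xyzw})) into the main theorem of Section~3, with uniqueness and $L(1)(V_B)_{(1)}=L(1)B$ following from Lemma~\ref{Vir-on-V-L} and the identification $(V_B)_{(1)}=B$.
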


\section*{Acknowledgement}
This work was done during the author's visit at Rutgers University under the host of Professor Haisheng Li in 2018. We would like to thank Professor Haisheng Li for his  generous help with the preparation of this paper. The author also wants to thank  Professor Shaobin Tan for his valuable discussions.

\end{document}